\pgfplotsset{compat=1.15}
\DeclareMathOperator{\Ker}{Ker}
\DeclareMathOperator{\im}{Im}
\DeclareMathOperator{\Aut}{Aut}
\DeclareMathOperator{\vfD}{Der^{\varphi}}
\DeclareMathOperator{\ivfD}{IDer^{\varphi}}
\DeclareMathOperator{\psiD}{Der^{\psi}}
\DeclareMathOperator{\ipsiD}{IDer^{\psi}}
\DeclareMathOperator{\apsiD}{ADer^{\psi}}
\DeclareMathOperator{\ppsiD}{PDer^{\psi}}
\theoremstyle{plain}
\newtheorem{theorem}{Theorem}[section]
\newtheorem{corollary}[theorem]{Corollary}
\newtheorem{proposition}[theorem]{Proposition}
\newtheorem{lemma}[theorem]{Lemma}
\theoremstyle{definition}
\newtheorem{definition}[theorem]{Definition}
\newtheorem{remark}[theorem]{Remark}
\newtheorem{example}[theorem]{Example}
\crefname{theorem}{Theorem}{Theorems}
\crefname{lemma}{Lemma}{Lemmas}
\crefname{corollary}{Corollary}{Corollaries}
\crefname{proposition}{Proposition}{Propositions}
\crefname{definition}{Definition}{Definitions}
\crefname{example}{Example}{Examples}
\crefname{remark}{Remark}{Remarks}
\crefname{conjecture}{Conjecture}{Conjectures}
\crefname{section}{Section}{Sections}
\crefname{equation}{\unskip}{\unskip}
\crefname{enumi}{\unskip}{\unskip}
\crefname{subsection}{Subsection}{Subsections}
\newcommand{\bt}{\beta}
\newcommand{\lb}{\lambda}
\newcommand{\gm}{\gamma}
\newcommand{\vf}{\varphi}
\newcommand{\dl}{\delta}
\newcommand{\Dl}{\Delta}
\newcommand{\sg}{\sigma}
\newcommand{\m}{{}^{-1}}
\newcommand{\sst}{\subseteq}
\newcommand{\impl}{\Rightarrow}
\newcommand{\wh}{\widehat}
\newcommand{\Dvf}{D^\varphi}
\newcommand{\Dpsi}{D^\psi}
\newcommand{\id}{\mathrm{id}}
\renewcommand{\iff}{\Leftrightarrow}
\begin{document}
	\title[Skew derivations of incidence algebras]{Skew derivations of incidence algebras}	
	\author{\'Erica Z. Fornaroli}
	\address{Departamento de Matem\'atica, Universidade Estadual de Maring\'a, Maring\'a, PR, CEP: 87020--900, Brazil}
	\email{ezancanella@uem.br}
	
	\author{Mykola Khrypchenko}
	\address{Departamento de Matem\'atica, Universidade Federal de Santa Catarina,  Campus Reitor Jo\~ao David Ferreira Lima, Florian\'opolis, SC, CEP: 88040--900, Brazil \and CMUP, Departamento de Matemática, Faculdade de Ciências, Universidade do Porto,
		Rua do Campo Alegre s/n, 4169--007 Porto, Portugal}
	\email{nskhripchenko@gmail.com}
	
	\subjclass[2020]{Primary: 16W25, 16S50; secondary: 06A11, 55U10, 55U15}
	\keywords{skew derivation, $\varphi$-derivation, incidence algebra, poset cohomology}
	
	\begin{abstract}
		In the first part of the paper we describe $\vf$-derivations of the incidence algebra $I(X,K)$ of a locally finite poset $X$ over a field $K$, where $\vf$ is an arbitrary automorphism of $I(X,K)$. We show that they admit decompositions similar to that of usual derivations of $I(X,K)$.  In particular, the quotient of the space of $\vf$-derivations of $I(X,K)$ by the subspace of inner $\vf$-derivations of $I(X,K)$ is isomorphic to the first group of certain cohomology of $X$, which is developed in the second part of the paper.
	\end{abstract}
	
	\maketitle
	
	\tableofcontents
	
	\section*{Introduction}
	
	Let $K$ be a field and $A$ a $K$-algebra. Denote by $\Aut(A)$ the automorphism group of $A$ and fix $\vf\in\Aut(A)$. A linear map $d:A\to A$ is called a \textit{$\vf$-derivation} of $A$ if
	\begin{align*}
		d(ab)=d(a)b+\vf(a)d(b)
	\end{align*}
	for all $a,b\in A$. We denote by $\vfD(A)$ the $K$-space of all $\vf$-derivations of $A$. If $A$ is associative, then for any fixed $a\in A$ the map $\Dvf_a:A\to A$ defined by
	\begin{align*}
		\Dvf_a(b)=ab-\vf(b)a
	\end{align*}
	is a $\vf$-derivation called \textit{inner}. The subspace of all inner $\vf$-derivations of $A$ will be denoted by $\ivfD(A)$. A \textit{skew derivation} of $A$ is a map $A\to A$ which is a $\vf$-derivation of $A$ for some $\vf\in\Aut(A)$. Notice that the case $\vf=\id$ corresponds to usual derivations of $A$.

	The automorphisms~\cite{Stanley70} and derivations~\cite{Baclawski72,SpDo} of an incidence algebra $I(X,K)$ were fully characterized yet in the early 1970s. It was proved that any derivation of $I(X,K)$ is the sum of an inner derivation and a so-called \textit{additive} derivation~\cite{Baclawski72,SpDo}. This result was generalized to a wider class of algebras in~\cite{Kh-der}. Later, Jordan~\cite{Khr16,Xiao15} and Lie~\cite{Zhang-Khrypchenko} derivations of $I(X,K)$ were described modulo the (already known) description of usual derivations of $I(X,K)$. A connection between additive and inner derivations of $I(X,K)$ was recently studied in~\cite{FP1}.
	
	However, little has been known about the structure of skew derivations of $I(X,K)$. We can mention here some results on triangular algebras, although they are not always applicable to general incidence algebras. Benkovi\v{c} studied Jordan~\cite{Benkovic16} and Lie~\cite{Benkovic22} $\sg$-derivations of triangular algebras and obtained some descriptions involving usual $\sg$-derivations of such algebras. Regarding the $\sg$-derivations themselves, we could only find the preprint~\cite{Juana13}. The author describes (generalized) $\sg$-derivations of a triangular algebra $\mathrm{Tri}(A,M,B)$ in terms of the corresponding maps of the algebras $A$ and $B$ and certain linear maps on $M$. Unfortunately, this doesn't give a lot of information on skew derivations of $I(X,K)$, even when $I(X,K)$ is a triangular algebra. Inspired by~\cite{Bresar95}, $\sg$-biderivations and $\sg$-commuting maps of triangular algebras were investigated in~\cite{Candindo-Repka-Juana17}.
	
	In this paper we describe $\vf$-derivations of the incidence algebra $I(X,K)$ of a locally finite poset $X$ over a field $K$, where $\vf=\xi_\bt\circ M_\sg\circ \hat\lb$ is an arbitrary automorphism of $I(X,K)$ (see \cref{iso-decomp}). We soon reduce the problem to the case of automorphisms $\psi$ of the form $M_\sg\circ \hat\lb$ and prove in \cref{psiD=ipsiD+apsiD} that any $\psi$-derivation of $I(X,K)$ decomposes into the sum of an inner $\psi$-derivation and an \textit{additive} $\psi$-derivation, where the latter is defined in terms of the so-called \textit{$(\sg,\lb)$-additive elements} of $I(X,K)$. The decomposition is not unique in general, so a natural question is to describe those inner $\psi$-derivations which are additive. This is done in \cref{ppsiD=ipsiD-cap-apsiD}. As in the classical case, the quotient of $\vfD(I(X,K))$ by the subspace $\ivfD(I(X,K))$ admits a cohomological interpretation. The appropriate poset cohomology $H^n_{(\sg,\lb)}(X,K)$, which we call \textit{$(\sg,\lb)$-cohomology}, is developed in \cref{sec-cohom}. Then $\vfD(I(X,K))/\ivfD(I(X,K))$ is showed to be isomorphic to $H^1_{(\sg,\lb)}(X,K)$ (see \cref{vfD/ivfD-cong-H^1}). In the final part of the paper we study some properties of the $(\sg,\lb)$-cohomology of $X$ depending on $\sg$, $\lb$ and $X$ (see \cref{H^n_(sg_lb)-cong-H^n_(sg'_lb),all-comp=>H^1_(sg_lb)=0}). In particular, we provide in \cref{all-comp=>H^1_(sg_lb)=0} a necessary condition for $H^1_{(\sg,\lb)}(X,K)$ to be trivial, which corresponds to the situation when all the $\vf$-derivations of $I(X,K)$ are inner. The propositions are illustrated by several examples.
	
	\section{Preliminaries}\label{sec-prelim}
	
	\subsection{Skew derivations}
	We begin with a general auxiliary fact. Let $A$ be an associative algebra. For any invertible $u\in A$ we denote by $\xi_u$ the inner automorphism of $A$ given by 
	\begin{align}\label{xi_u(a)=uau^(-1)}
		\xi_u(a)=u a u\m    
	\end{align}
	for all $a\in A$.  
	
	\begin{proposition}
		Let $A$ be an associative algebra, $d\in\vfD(A)$ and $\psi\in\Aut(A)$ such that $\vf=\xi_u\circ\psi$. Then $d':A\to A$ given by
		\begin{align*}
			d'(a)=u\m d(a)
		\end{align*}
		is a $\psi$-derivation of $A$. Moreover, if $d=D^\vf_a$, then $d'=D^\psi_{u\m a}$.
	\end{proposition}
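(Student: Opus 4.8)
The plan is a straightforward verification from the definitions, the only input being the hypothesis $\vf=\xi_u\circ\psi$, which unwinds to $\vf(a)=u\,\psi(a)\,u\m$ for all $a\in A$, equivalently $u\m\vf(a)=\psi(a)\,u\m$. This last form is the one to exploit: it says that left multiplication by $u\m$ intertwines $\vf$ and $\psi$, so that pulling $u\m$ through $\vf$ costs nothing but a change of $\vf$ into $\psi$.

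First I would note that $d'$ is linear, since $d$ is linear and left multiplication by $u\m$ is linear, and then check the $\psi$-Leibniz rule. For $a,b\in A$, applying the $\vf$-Leibniz rule for $d$ gives
\begin{align*}
 d'(ab)=u\m d(ab)=u\m d(a)\,b+u\m\vf(a)\,d(b).
\end{align*}
The first term is $d'(a)b$ by definition of $d'$; in the second term I rewrite $u\m\vf(a)=\psi(a)\,u\m$, obtaining $\psi(a)\,u\m d(b)=\psi(a)\,d'(b)$. Hence $d'(ab)=d'(a)b+\psi(a)d'(b)$, so $d'\in\psiD(A)$.

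For the ``moreover'' part, assume $d=\Dvf_a$, so that $d(b)=ab-\vf(b)a$ for all $b\in A$. Then $d'(b)=u\m ab-u\m\vf(b)a$, and using $u\m\vf(b)=\psi(b)\,u\m$ once more the subtracted term becomes $\psi(b)\,(u\m a)$; therefore $d'(b)=(u\m a)b-\psi(b)(u\m a)=\Dpsi_{u\m a}(b)$, i.e.\ $d'=\Dpsi_{u\m a}$.

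I do not expect any genuine obstacle: the argument is a two-line computation in each part. The one place to be attentive is the order of composition in $\vf=\xi_u\circ\psi$, which determines whether it is $u\m$ or $u$ that gets factored out of $d$; the stated formula $d'(a)=u\m d(a)$ is precisely the one compatible with $\vf(a)=u\,\psi(a)\,u\m$, and the same bookkeeping is what makes the inner $\vf$-derivation $\Dvf_a$ correspond to the inner $\psi$-derivation with parameter $u\m a$.
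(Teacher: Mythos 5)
Your proof is correct and follows essentially the same route as the paper: both verify the $\psi$-Leibniz rule by substituting $\vf(a)=u\psi(a)u\m$ into the $\vf$-Leibniz rule for $d$, and both handle the inner case by the same one-line computation. Your explicit isolation of the intertwining identity $u\m\vf(a)=\psi(a)u\m$ is just a cosmetic repackaging of the paper's direct substitution.
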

	\begin{proof}
		It is clear that $d'$ is linear, because $d$ is. Now, for all $a,b\in A$ we have
		\begin{align*}
			d'(ab)&=u\m d(ab)=u\m d(a)b+u\m\vf(a)d(b)\\
			&=u\m d(a)b+u\m u\psi(a)u\m d(b)=d'(a)b+\psi(a)d'(b).
		\end{align*}
		If $d=D^\vf_a$, then
		\begin{align*}
			d'(b)=u\m D^\vf_a(b)=u\m ab-u\m\vf(b)a=u\m ab-\psi(b)u\m a=D^\psi_{u\m a}(b).
		\end{align*}
	\end{proof}
	
	\begin{corollary}\label{vfD(A)=u.psiD(A)}
		Let $\psi\in\Aut(A)$ and $\vf=\xi_u\circ\psi$. Then 
		\begin{align*}
			\vfD(A)=u\cdot\psiD(A)\text{ and }\ivfD(A)=u\cdot\ipsiD(A).
		\end{align*}
	\end{corollary}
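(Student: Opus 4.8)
The plan is to deduce the corollary directly from the preceding Proposition, exploiting the symmetry between $\vf$ and $\psi$. First I would record what the Proposition already gives us: the assignment $\Phi\colon d\mapsto u\m d$ is a linear map $\vfD(A)\to\psiD(A)$, and by the ``moreover'' clause it restricts to a map $\ivfD(A)\to\ipsiD(A)$ sending $\Dvf_a\mapsto\Dpsi_{u\m a}$. Thus $\Phi$ takes $\vfD(A)$ into $\psiD(A)$ and $\ivfD(A)$ into $\ipsiD(A)$; to conclude it suffices to exhibit a two-sided inverse of $\Phi$ with the analogous behaviour on inner $\psi$-derivations.

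The key observation is that the hypothesis $\vf=\xi_u\circ\psi$ is equivalent to $\psi=\xi_u\m\circ\vf=\xi_{u\m}\circ\vf$, where the last equality $\xi_u\m=\xi_{u\m}$ follows immediately from \eqref{xi_u(a)=uau^(-1)}. Hence the Proposition may be applied a second time, with the triple $(\vf,\psi,u)$ replaced by $(\psi,\vf,u\m)$: this yields a linear map $\Psi\colon d'\mapsto (u\m)\m d'=u d'$ from $\psiD(A)$ to $\vfD(A)$, restricting to a map $\ipsiD(A)\to\ivfD(A)$ that sends $\Dpsi_b\mapsto\Dvf_{ub}$.

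Finally, $\Phi$ and $\Psi$ are mutually inverse, since $u(u\m d)=d$ for every $d\in\vfD(A)$ and $u\m(ud')=d'$ for every $d'\in\psiD(A)$. Therefore $\vfD(A)=\Psi(\psiD(A))=u\cdot\psiD(A)$, and restricting to the inner parts gives $\ivfD(A)=u\cdot\ipsiD(A)$ in the same way. I do not expect any genuine obstacle in this argument; the only point needing a moment's care is the identity $\xi_u\m=\xi_{u\m}$, which is what licenses the symmetric reapplication of the Proposition, and everything else is a purely formal manipulation of the correspondence already established there.
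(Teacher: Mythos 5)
Your argument is correct and is precisely the intended one: the paper states this corollary without proof as an immediate consequence of the preceding Proposition, and your two applications of that Proposition (using $\psi=\xi_{u\m}\circ\vf$ for the reverse inclusion) together with the observation that $d\mapsto u\m d$ and $d'\mapsto ud'$ are mutually inverse is exactly the implicit justification. Nothing to add.
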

	
	\subsection{Posets and incidence algebras}
	
	Let $(X,\le)$ be a poset (i.e., partially ordered set). An \textit{interval} in $X$ is a subposet $\{z\in X : x\leq z\leq y\}$ for some $x\le y$ in $X$. A poset $X$ is said to be \textit{locally finite} whenever all its intervals are finite. An element $x_0\in X$ such that for any $x\in X$ either $x_0\le x$ or $x_0\ge x$ is called \textit{all-comparable}. A \textit{chain} in $X$ is a linearly ordered subposet of $X$. The \textit{length} of a chain $C\sst X$ is $l(C):=|C|-1$. The \textit{length} of $X$, denoted by $l(X)$, is the supremum of $l(C)$ over all finite chains $C\sst X$. An automorphism of $X$ is a bijective map $\lb:X\to X$ such that $x\le y\iff\lb(x)\le\lb(y)$ for all $x,y\in X$. The group of automorphisms of $X$ is denoted by $\Aut(X)$.
	
	For the rest of the paper we fix a locally finite poset $X$ and a field $K$. The \emph{incidence algebra} $I(X,K)$ of $X$ over $K$ (see~\cite{Rota64}) is the $K$-space of functions $f:X\times X\to K$ such that $f(x,y)=0$ for all $x\nleq y$ with multiplication
	$$
	(fg)(x,y)=\sum_{x\le z\le y}f(x,z)g(z,y)
	$$
	for all $f,g\in I(X,K)$. The algebra $I(X,K)$ is associative and unital. Its identity element is denoted by $\dl$. Recall that $\dl(x,y)=1$, if $x=y$, and $\dl(x,y)=0$, otherwise. Another remarkable element of $I(X,K)$ is the so-called \textit{zeta function} given by $\zeta(x,y)=1$ for all $x\le y$ (and $\zeta(x,y)=0$ for $x\nleq y$). For any $x\le y$ denote by $e_{xy}$ the element of $I(X,K)$ such that $e_{xy}(u,v)=1$, if $(u,v)=(x,y)$, and $e_{xy}(u,v)=0$, otherwise. Notice that $e_{xy}e_{zw}=e_{xw}$, if $y=z$, and $e_{xy}e_{zw}=0$, otherwise. We set $e_x:=e_{xx}$ for all $x\in X$. Then $\{e_x\}_{x\in X}$ are orthogonal idempotents of $I(X,K)$.
	
	By \cite[Theorem~1.2.3]{SpDo} an element $f\in I(X,K)$ is invertible if and only if $f(x,x)\neq 0$ for all $x\in X$. 
	
	\subsection{Automorphisms of $I(X,K)$}
	Since $I(X,K)$ is associative, any invertible $\bt\in I(X,K)$ defines the inner automorphism $\xi_\bt\in\Aut(I(X,K))$ as in \cref{xi_u(a)=uau^(-1)}.
	We recall another two classes of automorphisms of $I(X,K)$, whose definitions are given below.
	
	An element $\sg\in I(X,K)$ is \emph{multiplicative} if $\sg(x,y)\neq 0$, for all $x\le y$, and 
	\begin{align}\label{sg(x_z)=sg(x_y)sg(y_z)}
		\sg(x,z)=\sg(x,y)\sg(y,z)
	\end{align}
	whenever $x\le y\le z$. Every such $\sg$ determines the \emph{multiplicative automorphism} $M_{\sg}\in\Aut(I(X,K))$ by $M_{\sg}(f)=\sg\ast f$, for all $f\in I(X,K)$, where $\sg\ast f$ is the \textit{Hadamard product} given by 
	\begin{align}\label{(sg*f)(x_y)=sg(x_y)f(x_y)}
		(\sg\ast f)(x,y)=\sg(x,y)f(x,y)
	\end{align}
	for all $x,y\in X$. If $\sg, \tau \in I(X,K)$ are multiplicative, then so is $\sg\ast\tau$ and $M_\sg\circ M_\tau=M_{\sg*\tau}$ (see~\cite[7.3]{SpDo}). A multiplicative $\sg\in I(X,K)$ is called \textit{fractional} if there exists $\eta:X\to K^*$ such that $\sg(x,y)=\eta(x)\eta(y)\m$ for all $x\le y$. Recall from \cite[Proposition 7.3.3]{SpDo} that $M_\sg$ is inner if and only if $\sg$ is fractional.
	
	Any $\lb\in\Aut(X)$ \emph{induces} $\hat\lb\in\Aut(I(X,K))$ by means of
	\begin{align}\label{hat-lb(f)(x_y)=f(lb^(-1)(x)_lb^(-1)(x))}
		\hat\lb(f)(x,y)=f(\lb\m(x),\lb\m(y))
	\end{align}
	for all $f\in I(X,K)$ and $x\leq y$ in $X$.
	
	\begin{theorem}\cite[Theorem~7.3.6]{SpDo}\label{iso-decomp}
		Every $\vf\in\Aut(I(X,K))$ admits a decomposition $\vf=\xi_\bt\circ M_\sg\circ \hat\lb$ for some invertible $\bt\in I(X,K)$, multiplicative $\sg\in I(X,K)$ and $\lb\in\Aut(X)$.
	\end{theorem}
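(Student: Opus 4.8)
\emph{The plan} is to recover the three factors $\lb$, $\sg$, $\bt$ from the way $\vf$ acts on the idempotents $e_x$, working first modulo the Jacobson radical. Write $J=\{f\in I(X,K):f(x,x)=0\text{ for all }x\}$; this is the Jacobson radical of $I(X,K)$ and $I(X,K)/J\cong K^X$ with pointwise operations. Denote reduction modulo $J$ by an overbar. Every idempotent of $K^X$ is the characteristic function of a subset of $X$, so the minimal nonzero idempotents of $K^X$ are exactly the $\bar e_x$, $x\in X$. Since $\vf$ descends to a ring automorphism $\bar\vf$ of $K^X$, and ring automorphisms permute the minimal nonzero idempotents (a property characterized purely ring-theoretically), there is a bijection $\mu\colon X\to X$ with $\bar\vf(\bar e_x)=\bar e_{\mu(x)}$, equivalently $\vf(e_x)-e_{\mu(x)}\in J$ for all $x\in X$.

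The second step is to promote this congruence to conjugation by a single invertible element of $I(X,K)$. Put $p_x=\vf(e_x)$ and $q_x=e_{\mu(x)}$; these are two families of orthogonal idempotents with $p_x-q_x\in J$. Set $\bt=\sum_{x\in X}q_xp_x$. Evaluating this pointwise-finite sum shows that for each pair $u\le v$ exactly one term survives and $\bt(u,v)=\vf(e_{\mu\m(u)})(u,v)$, so $\bt\in I(X,K)$; moreover $\bt(u,u)=\overline{\vf(e_{\mu\m(u)})}(u)=\bar e_u(u)=1$, so $\bt$ is invertible by \cite[Theorem~1.2.3]{SpDo}. Using orthogonality one checks $\bt p_x=q_xp_x=q_x\bt$ for every $x$, whence $\xi_\bt(\vf(e_x))=e_{\mu(x)}$. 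Thus the automorphism $\psi:=\xi_\bt\circ\vf$ satisfies $\psi(e_x)=e_{\mu(x)}$ for all $x\in X$.

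The third step identifies $\psi$ as $\hat\lb\circ M_\sg$. Comparing dimensions in $\psi(e_xI(X,K)e_y)=e_{\mu(x)}I(X,K)e_{\mu(y)}$ and using that $\dim_K e_xI(X,K)e_y$ is $1$ when $x\le y$ and $0$ otherwise, we get $x\le y\iff\mu(x)\le\mu(y)$; as $\mu$ is a bijection, $\lb:=\mu\in\Aut(X)$ and $\hat\lb(e_x)=e_{\mu(x)}=\psi(e_x)$. Hence $\eta:=\hat\lb\m\circ\psi$ fixes every $e_x$. For such $\eta$, $\eta$-invariance of the line $e_xI(X,K)e_y=Ke_{xy}$ forces $\eta(e_{xy})=c(x,y)e_{xy}$ with $c(x,y)\in K^*$ for $x\le y$; applying $\eta$ to $e_{xy}e_{yz}=e_{xz}$ gives $c(x,z)=c(x,y)c(y,z)$ for $x\le y\le z$, and $c(x,x)=1$, so $\sg$ defined by $\sg(x,y)=c(x,y)$ for $x\le y$ is multiplicative. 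Applying $\eta$ to $e_xfe_y=f(x,y)e_{xy}$ for arbitrary $f$ then gives $\eta(f)(x,y)=\sg(x,y)f(x,y)$, i.e. $\eta=M_\sg$. Therefore $\psi=\hat\lb\circ M_\sg$, so $\vf=\xi_{\bt\m}\circ\psi=\xi_{\bt\m}\circ\hat\lb\circ M_\sg$; finally $\hat\lb\circ M_\sg=M_{\hat\lb(\sg)}\circ\hat\lb$ (a routine check, and $\hat\lb(\sg)$ is again multiplicative), which yields $\vf=\xi_{\bt\m}\circ M_{\hat\lb(\sg)}\circ\hat\lb$, the asserted decomposition.

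The step I expect to be the main obstacle is the construction of $\bt$: it is precisely where the ``local'' information — the congruences $\vf(e_x)\equiv e_{\mu(x)}\pmod J$, one index $x$ at a time — must be glued into a single global inner automorphism, and it uses the explicit form of $I(X,K)$ and of its units in an essential way. A secondary but genuine point, relevant when $X$ is infinite, is to justify pointwise-finite infinite sums such as $\sum_x q_xp_x$ and $f=\sum_{x\le y}f(x,y)e_{xy}$, and the fact that multiplication in $I(X,K)$ distributes over them; these are what legitimize the manipulations in the second and third steps.
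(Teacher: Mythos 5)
The paper does not prove this statement at all: it is imported verbatim as \cite[Theorem~7.3.6]{SpDo}, so there is no internal proof to compare against. Your argument is correct and is essentially the classical proof of that cited theorem (going back to Stanley and Baclawski): pass to $I(X,K)/J\cong K^X$ to extract the permutation $\mu$, conjugate the orthogonal family $\vf(e_x)$ onto $e_{\mu(x)}$ by the explicitly constructed unit $\bt=\sum_x e_{\mu(x)}\vf(e_x)$, and then read off $\lb$ and $\sg$ from an automorphism fixing all $e_x$; the two points you flag yourself (that $J$ is the Jacobson radical, hence $\vf$-invariant, and that the pointwise-finite sums are summable and compatible with convolution) are exactly the standard facts about locally finite incidence algebras needed to make it rigorous, and your final commutation $\hat\lb\circ M_\sg=M_{\hat\lb(\sg)}\circ\hat\lb$ correctly puts the factors in the order the statement requires.
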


	\section{Skew derivations of $I(X,K)$}\label{sec-sg-der}

	\subsection{The description of $\vfD(I(X,K))$}
	
	Fix $\vf\in\Aut(I(X,K))$. By \cref{iso-decomp} we have $\vf=\xi_\bt\circ\psi$ with
	\begin{align}\label{psi=M_sg-circ-hat-lb}
		\psi=M_\sg\circ \hat\lb.
	\end{align}
	In view of \cref{vfD(A)=u.psiD(A)} it suffices to describe $\psiD(I(X,K))$, so we fix $\psi$ of the form \cref{psi=M_sg-circ-hat-lb} for the rest of the section.
	
	For any $f\in I(X,K)$ and $x\leq y$ in $X$, by \cref{(sg*f)(x_y)=sg(x_y)f(x_y),hat-lb(f)(x_y)=f(lb^(-1)(x)_lb^(-1)(x)),psi=M_sg-circ-hat-lb}, we have
	\begin{align}\label{vf(f)(x_y)}
		\psi(f)(x,y)=\sg(x,y)f(\lb^{-1}(x),\lb^{-1}(y))
	\end{align}
	and
	\begin{align}\label{vf(e_xy)}
		\psi(e_{xy})=\sg(\lb(x),\lb(y))e_{\lb(x)\lb(y)}.
	\end{align}
	
	We introduce a class of maps which will take part in the description of $\psi$-derivations of $I(X,K)$.
	\begin{definition}\label{additive_element}
		Let $\sg\in I(X,K)$ be multiplicative and $\lb\in\Aut(X)$. An element $\tau\in I(X,K)$ is called \emph{$(\sg,\lb)$-additive} if 
		\begin{align}\label{tau(x_y)=0-for-lb(x)-ne-y}
			\tau(x,y)=0,\text{ whenever }\lb(x)\not\le y,
		\end{align} 
		and 
		\begin{align}\label{tau_xyz}
			\tau(x,z)=\tau(x,y)+\sg(\lb(x),\lb(y))\tau(y,z)
		\end{align}
		for all $x\le y\le z$.
	\end{definition}
	
	Observe that $(\sg,\lb)$-additive elements form a $K$-subspace of $I(X,K)$.
	
	\begin{definition}\label{(sg_lb)-additive}
		Given a $(\sg,\lb)$-additive $\tau\in I(X,K)$, define $L_{\tau}:I(X,K)\to I(X,K)$ by 
		\begin{align}\label{(b-inv-d(f))(lb(x)_y)=tau(x_y)f(x_y)}
			L_{\tau}(f)(\lb(x),y)=\tau(x,y)f(x,y),
		\end{align} 
		for all $f\in I(X,K)$ and $x,y\in X$.    
	\end{definition}
	Note that if $\lb(x)\nleq y$, then $L_{\tau}(f)(\lb(x),y)=0$ because $\tau(x,y)=0$. Thus $L_{\tau}(f)\in I(X,K)$. We also have $L_{\tau}(f)(\lb(x),y)=0$ whenever $x\nleq y$, since $f\in I(X,K)$. 
	
	\begin{remark}\label{L_tau(e_xy)}
		Observe that for all $x\le y$
		\begin{align}\label{L_tau(e_xy)-defn}
			L_{\tau}(e_{xy})=
			\begin{cases}
				\tau(x,y)e_{\lb(x)y}, & \text{if }\lb(x)\le y,\\
				0, &\text{otherwise}.
			\end{cases}
		\end{align}
		In particular, $L_{\tau}=L_{\tau'}$ if and only if $\tau=\tau'$.
	\end{remark}
	
	\begin{proposition}\label{L_tau-in-vfD}
		If $\tau\in I(X,K)$ is $(\sg,\lb)$-additive, then $L_{\tau}\in \psiD(I(X,K))$.
	\end{proposition}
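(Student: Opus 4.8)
The plan is to verify directly that $L_\tau$ satisfies the $\psi$-Leibniz rule $L_\tau(fg)=L_\tau(f)g+\psi(f)L_\tau(g)$ for all $f,g\in I(X,K)$; linearity of $L_\tau$ is immediate from \cref{(sg_lb)-additive}, since for each fixed pair $(x,y)$ the value $L_\tau(f)(\lb(x),y)=\tau(x,y)f(x,y)$ is $K$-linear in $f$, and $L_\tau(f)\in I(X,K)$ was already observed. As $X$ is locally finite, it suffices to evaluate both sides at an arbitrary pair $(p,q)$ with $p\le q$. Put $x:=\lb\m(p)$; then for every $h\in I(X,K)$ we have $L_\tau(h)(p,q)=\tau(x,q)h(x,q)$ by \cref{(b-inv-d(f))(lb(x)_y)=tau(x_y)f(x_y)}, so in particular $L_\tau(fg)(p,q)=\tau(x,q)(fg)(x,q)=\tau(x,q)\sum_{x\le z\le q}f(x,z)g(z,q)$.

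First I would dispose of the degenerate case $x\nleq q$: here $L_\tau(fg)(p,q)=0$ because $fg\in I(X,K)$, and each summand on the right-hand side vanishes too, as any nonzero contribution would force a chain from $x$ to $q$ through the support conditions on $f$, $L_\tau(f)$, $\psi(f)$ and $L_\tau(g)$. So assume $x\le q$. Expanding, $(L_\tau(f)g)(p,q)=\sum_{p\le w\le q}\tau(x,w)f(x,w)g(w,q)$; using \cref{tau(x_y)=0-for-lb(x)-ne-y} to discard the terms with $p=\lb(x)\nleq w$ and $f\in I(X,K)$ to discard those with $x\nleq w$, this sum is rewritten over the finite interval $\{w:x\le w\le q\}$. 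Similarly $(\psi(f)L_\tau(g))(p,q)=\sum_{p\le w\le q}\psi(f)(p,w)L_\tau(g)(w,q)$; substituting $w=\lb(y)$, applying \cref{vf(f)(x_y)} to $\psi(f)$ and \cref{tau(x_y)=0-for-lb(x)-ne-y} to $\tau$, this becomes $\sum_{x\le w\le q}\sg(\lb(x),\lb(w))f(x,w)\tau(w,q)g(w,q)$.

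Adding the two expansions, the right-hand side equals $\sum_{x\le w\le q}f(x,w)g(w,q)\bigl(\tau(x,w)+\sg(\lb(x),\lb(w))\tau(w,q)\bigr)$; now the additivity relation \cref{tau_xyz} applied to the triple $x\le w\le q$ collapses the factor in parentheses to the constant $\tau(x,q)$, and pulling it out yields $\tau(x,q)\sum_{x\le w\le q}f(x,w)g(w,q)=\tau(x,q)(fg)(x,q)=L_\tau(fg)(p,q)$, as desired. The one genuinely delicate step is the middle one: keeping track of the $\lb^{\pm1}$'s and of the supports of $\tau$, $f$ and $g$ so that both sums on the right end up over the same finite range $[x,q]$. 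Once that alignment is achieved, \cref{tau_xyz} closes the argument in a single line.
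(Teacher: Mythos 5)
Your proof is correct and follows essentially the same route as the paper: a direct pointwise verification of the $\psi$-Leibniz identity at $(\lb(x),y)$, splitting off the degenerate case $x\nleq y$, realigning both sums over the common interval $[x,y]$ via the support conditions on $f$, $g$ and \cref{tau(x_y)=0-for-lb(x)-ne-y}, and then collapsing the bracket with \cref{tau_xyz}. The only cosmetic difference is your parametrization by $(p,q)$ with $x=\lb\m(p)$ instead of the paper's $(\lb(x),y)$.
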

	\begin{proof}
		Clearly, $ L_{\tau}$ is linear. Let $f,g\in I(X,K)$ and $x,y\in X$ with $\lb(x)\leq y$. Then 
		\begin{align}
			(L_{\tau}(f)g+\psi(f)L_{\tau}(g))(\lb(x),y) & = \sum_{\lb(x)\leq z\leq y} L_{\tau}(f)(\lb(x),z)g(z,y) \nonumber\\
			& \quad +\sum_{\lb(x)\leq \lb(z)\leq y}\psi(f)(\lb(x),\lb(z))L_{\tau}(g)(\lb(z),y). \label{Ltau1}
		\end{align}
		
		Suppose $x\nleq y$. In this case, $L_{\tau}(f)(\lb(x),z)=0$ for all $z\in X$ such that $\lb(x)\leq z\leq y$, because $x\nleq z$. Therefore the first summation in \cref{Ltau1} is zero. Moreover, if $\lb(x)\leq \lb(z)\leq y$, then $z\nleq y$, because otherwise $\lb(x)\leq \lb(z)\leq\lb(y)$ and so $x\leq y$, a contradiction. Thus $L_{\tau}(g)(\lb(z),y)=0$ and, therefore, the second summation in \cref{Ltau1} is zero. It follows that $(L_{\tau}(f)g+\psi(f)L_{\tau}(g))(\lb(x),y)=0=L_{\tau}(fg)(\lb(x),y)$.
		
		Now, consider $x\leq y$. Then, using \cref{vf(f)(x_y),(b-inv-d(f))(lb(x)_y)=tau(x_y)f(x_y)}, we rewrite \cref{Ltau1} as
		\begin{align}
			(L_{\tau}(f)g+\psi(f)L_{\tau}(g))(\lb(x),y) & = \sum_{{\lb(x)\leq z\leq y}\atop{x\leq z}} \tau(x,z)f(x,z)g(z,y) \nonumber\\
			& \quad +\sum_{{\lb(x)\leq \lb(z)\leq y}\atop{z\leq y}}\sg(\lb(x),\lb(z))f(x,z)\tau(z,y)g(z,y)\nonumber\\
			& = \sum_{{\lb(x)\leq z\leq y}\atop{x\leq z\leq y}} \tau(x,z)f(x,z)g(z,y) \nonumber\\
			& \quad +\sum_{{\lb(x)\leq \lb(z)\leq y}\atop{x\leq z\leq y}}\sg(\lb(x),\lb(z))\tau(z,y)f(x,z)g(z,y). \label{Ltau2}
		\end{align}
		If $\lb(x)\nleq z$, then $\tau(x,z)=0$, and if $\lb(z)\nleq y$, then $\tau(z,y)=0$. Thus, by \cref{Ltau2,tau_xyz}, 
		\begin{align*}
			(L_{\tau}(f)g+\psi(f)L_{\tau}(g))(\lb(x),y) & = \sum_{x\leq z\leq y} \tau(x,z)f(x,z)g(z,y)\\
			& \quad +\sum_{x\leq z\leq y}\sg(\lb(x),\lb(z))\tau(z,y)f(x,z)g(z,y)\\
			& = \sum_{x\leq z\leq y} [\tau(x,z)+\sg(\lb(x),\lb(z))\tau(z,y)]f(x,z)g(z,y)\\
			& = \sum_{x\leq z\leq y} \tau(x,y)f(x,z)g(z,y)\\
			& = \tau(x,y)(fg)(x,y)= L_{\tau}(fg)(\lb(x),y).
		\end{align*}
		Since $\lb\in \Aut(X)$, it follows that $L_{\tau}(fg)= L_{\tau}(f)g+\psi(f)L_{\tau}(g)$.
	\end{proof}
	
	\begin{definition}\label{vf_additive}
		A $\psi$-derivation $d$ of $I(X,K)$ will be called \emph{additive}, if there exists a $(\sg,\lb)$-additive $\tau\in I(X,K)$ such that $d=L_{\tau}$.
	\end{definition}
	
	Observe that $\tau\in I(X,K)$ from \cref{vf_additive} is unique by \cref{L_tau(e_xy)}.
	
	We denote the set of all additive $\psi$-derivations of $I(X,K)$ by $\apsiD(I(X,K))$. Clearly, $\apsiD(I(X,K))$ is a $K$-subspace of $\psiD(I(X,K))$.
	
	\begin{lemma}\label{d(e_x)=0iff}
		A $\psi$-derivation $d$ of $I(X,K)$ is additive if and only if $d(e_x)=0$ for all $x\in X$.
	\end{lemma}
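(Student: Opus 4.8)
The plan is to prove the two implications separately, the backward one carrying the real content. For the forward direction, suppose $d=L_\tau$ for some $(\sg,\lb)$-additive $\tau\in I(X,K)$. I would first record that $\tau(x,x)=0$ for every $x\in X$: substituting $x=y=z$ into \cref{tau_xyz} gives $\tau(x,x)=\tau(x,x)+\sg(\lb(x),\lb(x))\tau(x,x)$, and $\sg(\lb(x),\lb(x))=1$ because $\sg$ is multiplicative, whence $\tau(x,x)=0$. Then \cref{L_tau(e_xy)} immediately yields $d(e_x)=L_\tau(e_{xx})=0$ for all $x$.

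For the backward direction, assume $d\in\psiD(I(X,K))$ satisfies $d(e_x)=0$ for all $x\in X$. The crucial device is the ``idempotent sandwich'': for $x\le y$ one has $e_xfe_y=f(x,y)e_{xy}$, while $e_xfe_y=0$ for $x\nleq y$, which isolates a single entry of $f$; one also notes that $\psi(e_x)=e_{\lb(x)}$ by \cref{vf(e_xy)} and $\sg(\lb(x),\lb(x))=1$. Applying $d$ and using the defining relation $d(ab)=d(a)b+\psi(a)d(b)$ together with $d(e_x)=d(e_y)=0$, one gets $d(e_xfe_y)=\psi(e_x)\,d(f)\,e_y=e_{\lb(x)}\,d(f)\,e_y$, which equals $d(f)(\lb(x),y)\,e_{\lb(x)y}$ when $\lb(x)\le y$ and vanishes otherwise. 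Specializing to $f=e_{xy}$ shows that $d(e_{xy})$ is supported at the single position $(\lb(x),y)$ when $\lb(x)\le y$ and is $0$ when $\lb(x)\nleq y$. I would therefore set $\tau(x,y):=d(e_{xy})(\lb(x),y)$ for $x\le y$ and $\tau(x,y):=0$ for $x\nleq y$; this $\tau$ lies in $I(X,K)$ and satisfies \cref{tau(x_y)=0-for-lb(x)-ne-y} by construction, and $d(e_{xy})=\tau(x,y)e_{\lb(x)y}$ for all $x\le y$.

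Next I would check that $\tau$ is $(\sg,\lb)$-additive: for $x\le y\le z$, expanding $d(e_{xz})=d(e_{xy}e_{yz})=d(e_{xy})e_{yz}+\psi(e_{xy})d(e_{yz})$ via \cref{vf(e_xy)} and the support information above, and reading off the coefficient of $e_{\lb(x)z}$, yields exactly \cref{tau_xyz} (the degenerate cases, where $\lb(x)\nleq z$, collapse to $0=0+\sg(\lb(x),\lb(y))\cdot 0$ since the relevant values of $\tau$ are then forced to vanish). Hence $L_\tau$ is well defined and is a $\psi$-derivation by \cref{L_tau-in-vfD}, and $L_\tau(e_{xy})=d(e_{xy})$ for all $x\le y$ by \cref{L_tau(e_xy)}. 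To upgrade this to the identity $d=L_\tau$ on all of $I(X,K)$ — which cannot be obtained from a spanning argument, as $I(X,K)$ need not be finite-dimensional — I would again invoke the sandwich identity: for arbitrary $f\in I(X,K)$ and any $x,y$ with $\lb(x)\le y$, applying $d$ to $e_xfe_y=f(x,y)e_{xy}$ gives $d(f)(\lb(x),y)e_{\lb(x)y}$ on one side and $f(x,y)d(e_{xy})=\tau(x,y)f(x,y)e_{\lb(x)y}$ on the other, so $d(f)(\lb(x),y)=\tau(x,y)f(x,y)=L_\tau(f)(\lb(x),y)$; as $\lb$ is a bijection of $X$, every pair $u\le v$ in $X$ has the form $(\lb(x),y)$, whence $d=L_\tau\in\apsiD(I(X,K))$.

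The main obstacle is precisely this last step: agreement of $d$ with the candidate additive $\psi$-derivation $L_\tau$ on all the $e_{xy}$ does not by itself force equality of the two maps, and the resolution is the idempotent-sandwich trick $e_xfe_y=f(x,y)e_{xy}$, which localizes the comparison to one matrix entry at a time. By contrast, the verification of the cocycle identity \cref{tau_xyz} is routine bookkeeping of which products $e_{ab}e_{cd}$ survive, requiring only mild care in the degenerate support cases.
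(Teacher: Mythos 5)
Your proof is correct and takes essentially the same route as the paper's: the same definition $\tau(x,y)=d(e_{xy})(\lb(x),y)$, the same use of $d(e_x)=0$ and the derivation identity to localize $d(e_{xy})$ at the entry $(\lb(x),y)$, the same expansion of $d(e_{xy}e_{yz})$ to verify the cocycle condition, and the same sandwich identity $e_xfe_y=f(x,y)e_{xy}$ to upgrade agreement on the $e_{xy}$ to the equality $d=L_\tau$ on all of $I(X,K)$. The forward direction via $\tau(x,x)=0$ also matches the paper.
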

	\begin{proof}
		\textit{The ``if'' part.} Assume that $d(e_x)=0$ for all $x\in X$. Define $\tau\in I(X,K)$ by
		\begin{align}\label{definition_tau}
			\tau(x,y)=d(e_{xy})(\lb(x),y), \text{ for all } x\leq y.
		\end{align}
		Clearly, $\tau$ satisfies \cref{tau(x_y)=0-for-lb(x)-ne-y}. Let $x\leq y$ in $X$. Then, by \cref{vf(e_xy)} and the assumption on $d$,
		\begin{align*}
			d(e_{xy}) & = d(e_xe_{xy}e_y)=d(e_x)e_{xy}+\psi(e_x)d(e_{xy}e_y)\\
			& = \psi(e_x)d(e_{xy})e_y+\psi(e_x)\psi(e_{xy})d(e_y) = e_{\lb(x)}d(e_{xy})e_y,
		\end{align*}
		and therefore
		\begin{align}\label{d(e_xy)=tau(x_y)e_lb(x)y}
			d(e_{xy})& =\begin{cases}
				\tau(x,y)e_{\lb(x)y}, & \text{if }\lb(x)\leq y,\\
				0, & \text{otherwise}.
			\end{cases}
		\end{align}
		
		To show that $\tau$ satisfies \cref{tau_xyz}, let $x\le y\le z$ in $X$. Assume first that $\lb(x)\nleq z$. Then clearly $\lb(x)\nleq y$. Moreover, if we have $\lb(y)\leq z$, then $\lb(x)\leq \lb(y)\leq z$, a contradiction with $\lb(x)\nleq z$. So $\lb(y)\nleq z$. Thus, by \cref{tau(x_y)=0-for-lb(x)-ne-y},
		$$\tau(x,z)=0=\tau(x,y)+\sg(\lb(x),\lb(y))\tau(y,z).$$
		
		Now, suppose $\lb(x)\leq z$. Then 
		\begin{align}\label{tau8}
			\tau(x,z)e_{\lb(x)z} = d(e_{xz})=d(e_{xy}e_{yz}) = d(e_{xy})e_{yz}+ \psi(e_{xy})d(e_{yz}). 
		\end{align}
		We have $3$ cases to consider. 
		
		\textit{Case 1.} $\lb(x)\leq y$.
		By \cref{d(e_xy)=tau(x_y)e_lb(x)y,tau8},
		\begin{align}\label{tau9}
			\tau(x,z)e_{\lb(x)z} & = \tau(x,y)e_{\lb(x)y}e_{yz}+\psi(e_{xy})d(e_{yz}).
		\end{align}
		If $\lb(y)\leq z$, then by \cref{vf(e_xy),tau9,d(e_xy)=tau(x_y)e_lb(x)y} 
		\begin{align*}
			\tau(x,z)e_{\lb(x)z}&=\tau(x,y)e_{\lb(x)z}+\sg(\lb(x),\lb(y))e_{\lb(x)\lb(y)}\tau(y,z)e_{\lb(y)z}\\ &=\tau(x,y)e_{\lb(x)z}+\sg(\lb(x),\lb(y))\tau(y,z)e_{\lb(x)z}.
		\end{align*}
		Otherwise, by \cref{tau9,d(e_xy)=tau(x_y)e_lb(x)y,tau(x_y)=0-for-lb(x)-ne-y}
		\begin{align*}
			\tau(x,z)e_{\lb(x)z}&=\tau(x,y)e_{\lb(x)z}+0
			= \tau(x,y)e_{\lb(x)z}+\sg(\lb(x),\lb(y))\tau(y,z)e_{\lb(x)z}.
		\end{align*}
		Therefore, \cref{tau_xyz} holds. 
		
		\textit{Case 2.} $\lb(x)\nleq y$ and $\lb(y)\leq z$.
		In this case, by \cref{tau8} we have,
		\begin{align*}
			\tau(x,z)e_{\lb(x)z} & = 0+\sg(\lb(x),\lb(y))\tau(y,z)e_{\lb(x)z}\\
			& = [\tau(x,y)+\sg(\lb(x),\lb(y))\tau(y,z)]e_{\lb(x)z},
		\end{align*}
		whence \cref{tau_xyz}. 
		
		\textit{Case 3.} $\lb(x)\nleq y$ and $\lb(y)\nleq z$.
		In this case, $\tau(x,y)=\tau(y,z)=0$ by \cref{tau(x_y)=0-for-lb(x)-ne-y} and hence by \cref{d(e_xy)=tau(x_y)e_lb(x)y,tau8}, 
		\begin{align*}
			\tau(x,z)e_{\lb(x)z} = 0+0 =[\tau(x,y)+\sg(\lb(x),\lb(y))\tau(y,z)]e_{\lb(x)z}.
		\end{align*}
		Thus, \cref{tau_xyz} holds.
		
		It follows that $\tau$ is $(\sg,\lb)$-additive. Let $f\in I(X,K)$ and $x,y\in X$ with $\lb(x)\leq y$. If $x\leq y$, then using $d(e_x)=d(e_y)=0$ we have
		\begin{align}
			d(f)(\lb(x),y) e_{\lb(x)y} & = e_{\lb(x)}d(f)e_y = \psi(e_x)d(f)e_{y}\nonumber\\
			& =\psi(e_x)d(fe_{y})=d(e_xfe_{y})\label{exfey=0}\\
			& = d(f(x,y)e_{xy})=f(x,y)d(e_{xy})\nonumber\\
			& = f(x,y)\tau(x,y)e_{\lb(x)y}\nonumber,
		\end{align}
		where the last equality is due to \cref{d(e_xy)=tau(x_y)e_lb(x)y}. Thus, 
		$$
		d(f)(\lb(x),y)= f(x,y)\tau(x,y)=L_{\tau}(f)(\lb(x),y).
		$$
		And if $x\nleq y$, then by \cref{exfey=0}, $d(f)(\lb(x),y)=0=L_{\tau}(f)(\lb(x),y)$. Since $\lb\in \Aut(X)$, it follows that $d(f)=L_{\tau}(f)$. Thus  $d=L_{\tau}$ is additive.
		
		\textit{The ``only if'' part.} Suppose there is a $(\sg,\lb)$-additive $\tau \in I(X,K)$ such that $d=L_{\tau}$. By \cref{tau_xyz}, $\tau(x,x)=0$ for all $x\in X$. Thus, given $u,v\in X$ with $\lb(u)\leq v$, we have
		$$ d(e_x)(\lb(u),v) =\tau(u,v)e_x(u,v)=0. $$
		Therefore, $d(e_x)=0$ for all $x\in X$.
	\end{proof}
	
	\begin{lemma}\label{d(e_x)=ad_f(e_x)}
		Let $d\in \psiD(I(X,K))$ and define $f\in I(X,K)$ by $f(x,y)=d(e_y)(x,y)$ for all $x\le y$. Then $d(e_x)=\Dpsi_f(e_x)$ for all $x\in X$.
	\end{lemma}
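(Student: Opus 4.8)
The plan is to compare the two elements $d(e_x)$ and $\Dpsi_f(e_x)=fe_x-\psi(e_x)f$ of $I(X,K)$ by evaluating them at an arbitrary pair $u\le v$. A preliminary remark is that $\psi(e_x)=e_{\lb(x)}$: by \cref{vf(e_xy)} we have $\psi(e_x)=\sg(\lb(x),\lb(x))e_{\lb(x)}$, and multiplicativity of $\sg$ forces $\sg(\lb(x),\lb(x))=1$. So it suffices to prove $d(e_x)=fe_x-e_{\lb(x)}f$.

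First I would determine the ``shape'' of $d(e_x)$. Applying $d$ to $e_x=e_xe_x$ and using $\psi(e_x)=e_{\lb(x)}$ gives
\[
d(e_x)=d(e_x)e_x+e_{\lb(x)}d(e_x).
\]
The summand $d(e_x)e_x$ is supported on the ``column'' $\{(u,x):u\le x\}$ and the summand $e_{\lb(x)}d(e_x)$ on the ``row'' $\{(\lb(x),v):\lb(x)\le v\}$, so $d(e_x)$ is supported on the union of these two sets; moreover, evaluating the displayed identity at $(\lb(x),x)$ (when $\lb(x)\le x$) forces the crossing entry $d(e_x)(\lb(x),x)$ to equal twice itself, hence to vanish.

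Then I would match the two summands. For the column part, $(d(e_x)e_x)(u,v)$ equals $d(e_x)(u,x)$ when $v=x$ and $0$ otherwise, and the same description holds for $(fe_x)(u,v)$ with $d(e_x)(u,x)$ replaced by $f(u,x)$; since $f(u,x)=d(e_x)(u,x)$ by the definition of $f$, we get $d(e_x)e_x=fe_x$. For the row part, I need $e_{\lb(x)}d(e_x)=-e_{\lb(x)}f$, that is, $d(e_x)(\lb(x),v)=-f(\lb(x),v)=-d(e_v)(\lb(x),v)$ for every $v$ with $\lb(x)\le v$. When $v\ne x$ this follows by applying $d$ to $e_xe_v=0$, which yields $d(e_x)e_v+e_{\lb(x)}d(e_v)=0$, and reading off the value at $(\lb(x),v)$; when $v=x$ both sides vanish by the crossing-entry identity established above. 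Adding the two summands gives $d(e_x)=d(e_x)e_x+e_{\lb(x)}d(e_x)=fe_x-e_{\lb(x)}f=\Dpsi_f(e_x)$, which proves the lemma.

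There is no deep difficulty here; the main thing requiring care is the bookkeeping. One must note that $\psi(e_x)$ is literally $e_{\lb(x)}$ and not just a scalar multiple of it, one must check that the ``column'' and ``row'' parts together exhaust all the nonzero entries of $d(e_x)$, and one must handle the case $v=x$ in the row computation separately, since there $e_xe_v=0$ is unavailable and one instead uses the vanishing of $d(e_x)(\lb(x),x)$. I expect this last point — organizing the support analysis so that these pieces fit together cleanly — to be the only slightly delicate step.
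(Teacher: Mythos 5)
Your proposal is correct and follows essentially the same route as the paper: decompose $d(e_x)=d(e_x)e_x+\psi(e_x)d(e_x)$, match the column part with $fe_x$ via the definition of $f$, kill the crossing entry (the paper's identity $\psi(e_x)d(e_x)e_x=0$), and get the row part from $d(e_xe_v)=0$ for $v\ne x$. The only cosmetic difference is that you make $\psi(e_x)=e_{\lb(x)}$ explicit, which the paper leaves implicit.
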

	\begin{proof}
		For each $x\in X$,
		\begin{align}\label{adfe_x}
			\Dpsi_f(e_x)=fe_x-\psi(e_x)f
		\end{align}
		and
		\begin{align}\label{de_x}
			d(e_x)=d(e_x)e_x+\psi(e_x)d(e_x).
		\end{align}
		
		Let $u\leq v$ in $X$. Then 
		\begin{align*}
			(fe_x)(u,v) &=\begin{cases}
				0, & \text{if } v\neq x\\
				f(u,x), & \text{if } v=x
			\end{cases}\\
			&=\begin{cases}
				0, & \text{if } v\neq x\\
				d(e_x)(u,x), & \text{if } v=x
			\end{cases}\\
			&=(d(e_x)e_x)(u,v).
		\end{align*}
		Therefore
		\begin{align}\label{fe_x=d(e_x)e_x}
			fe_x=d(e_x)e_x. 
		\end{align}
		By \cref{de_x}, for all $x\in X$,
		\begin{align}\label{vf(e_x)de_x}
			\psi(e_x)d(e_x)e_x=0,
		\end{align}
		thus
		\begin{align}\label{vf(e_x)fe_x}
			\psi(e_x)fe_x=0,
		\end{align}
		by \cref{fe_x=d(e_x)e_x,vf(e_x)de_x}. 
		
		If $u\leq x$, then 
		\begin{align*}
			(\psi(e_x)f+\psi(e_x)d(e_x))(u,x)e_{ux} = e_u\psi(e_x)fe_x+e_u\psi(e_x)d(e_x)e_x=0,
		\end{align*}
		by \cref{vf(e_x)de_x,vf(e_x)fe_x}.
		And if $u\leq v\neq x$, then by \cref{de_x,fe_x=d(e_x)e_x}, 
		\begin{align*}
			(\psi(e_x)f+\psi(e_x)d(e_x))(u,v)e_{uv} & = e_u\psi(e_x)fe_v+e_u\psi(e_x)d(e_x)e_v\\
			& = e_u\psi(e_x)d(e_v)e_v+e_u(d(e_x)-d(e_x)e_x)e_v\\
			& = e_u\psi(e_x)(d(e_v)-\psi(e_v)d(e_v))+e_ud(e_x)e_v\\
			& = e_u\psi(e_x)d(e_v)+e_ud(e_x)e_v\\
			& = e_u(d(e_x)e_v+\psi(e_x)d(e_v))\\
			& = e_ud(e_xe_v)=0.
		\end{align*}
		Thus, $\psi(e_x)f+\psi(e_x)d(e_x)=0$, that is,
		\begin{align}\label{vf(e_x)d(e_x)=-vf(e_x)f}
			\psi(e_x)d(e_x)=-\psi(e_x)f.
		\end{align}
		Therefore, $d(e_x)=\Dpsi_f(e_x)$, by \cref{fe_x=d(e_x)e_x,vf(e_x)d(e_x)=-vf(e_x)f,adfe_x,de_x}.
	\end{proof}
	
	\begin{theorem}\label{psiD=ipsiD+apsiD}
		Let $\psi=M_\sg\circ \hat\lb$. Then $\psiD(I(X,K))=\ipsiD(I(X,K))+\apsiD(I(X,K))$.
	\end{theorem}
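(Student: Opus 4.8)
The plan is to show the nontrivial inclusion $\psiD(I(X,K))\sst\ipsiD(I(X,K))+\apsiD(I(X,K))$; the reverse inclusion is immediate since both $\ipsiD(I(X,K))$ and $\apsiD(I(X,K))$ are $K$-subspaces of $\psiD(I(X,K))$ (the first by definition of inner $\psi$-derivations, the second as noted after \cref{vf_additive}).

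So fix $d\in\psiD(I(X,K))$. First I would invoke \cref{d(e_x)=ad_f(e_x)}: define $f\in I(X,K)$ by $f(x,y)=d(e_y)(x,y)$ for all $x\le y$, so that $d(e_x)=\Dpsi_f(e_x)$ for every $x\in X$. Since $\Dpsi_f$ is itself a $\psi$-derivation of $I(X,K)$ (it is inner), the difference $d':=d-\Dpsi_f$ again lies in $\psiD(I(X,K))$, being a $K$-linear combination of $\psi$-derivations. The key point is that $d'$ kills all the diagonal idempotents: $d'(e_x)=d(e_x)-\Dpsi_f(e_x)=0$ for all $x\in X$.

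Now I would apply \cref{d(e_x)=0iff}: a $\psi$-derivation vanishing on every $e_x$ is additive, so there is a $(\sg,\lb)$-additive $\tau\in I(X,K)$ with $d'=L_\tau$. Therefore
\begin{align*}
	d=\Dpsi_f+d'=\Dpsi_f+L_\tau\in\ipsiD(I(X,K))+\apsiD(I(X,K)),
\end{align*}
which completes the argument.

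The proof is short because the two preceding lemmas carry all the weight: \cref{d(e_x)=ad_f(e_x)} produces the inner part that matches $d$ on the idempotents, and \cref{d(e_x)=0iff} identifies the remainder as additive. Accordingly, there is no real obstacle left in this theorem itself; the only thing to be careful about is that $\Dpsi_f$ is genuinely a $\psi$-derivation (so that $d-\Dpsi_f$ stays in $\psiD(I(X,K))$), which is exactly the content of the inner-$\psi$-derivation formula recalled in the introduction.
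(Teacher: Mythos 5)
Your proof is correct and follows essentially the same route as the paper: it invokes \cref{d(e_x)=ad_f(e_x)} to produce the inner $\psi$-derivation $\Dpsi_f$ agreeing with $d$ on the idempotents $e_x$, and then applies \cref{d(e_x)=0iff} to identify $d-\Dpsi_f$ as additive. No gaps.
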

	\begin{proof}
		Let $d\in \psiD(I(X,K))$. By \cref{d(e_x)=ad_f(e_x)}, there exists $f\in I(X,K)$ such that $d(e_x)=\Dpsi_f(e_x)$ for all $x\in X$. Thus, $d_1:=d-\Dpsi_f$ is an additive $\psi$-derivation by \cref{d(e_x)=0iff}, and therefore $d=\Dpsi_f+d_1\in \ipsiD(I(X,K))+\apsiD(I(X,K))$.
	\end{proof}
	
	\cref{psiD=ipsiD+apsiD,vfD(A)=u.psiD(A)} imply the following.
	\begin{corollary}\label{vfD=ivfD+bt.avfD}
		Let $\vf=\xi_\bt\circ \psi$ with $\psi=M_\sg\circ \hat\lb$. Then $\vfD(I(X,K))=\ivfD(I(X,K))+\bt\cdot\apsiD(I(X,K))$.
	\end{corollary}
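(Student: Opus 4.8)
The plan is to deduce \cref{vfD=ivfD+bt.avfD} directly from the two results it cites, with no new computation needed beyond bookkeeping with the correspondence $\vfD(A)=u\cdot\psiD(A)$.

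First I would invoke \cref{vfD(A)=u.psiD(A)} with $u=\bt$: since $\vf=\xi_\bt\circ\psi$, we have $\vfD(I(X,K))=\bt\cdot\psiD(I(X,K))$ and $\ivfD(I(X,K))=\bt\cdot\ipsiD(I(X,K))$. Next I would apply \cref{psiD=ipsiD+apsiD}, which is available because $\psi$ has the required form $M_\sg\circ\hat\lb$, to write $\psiD(I(X,K))=\ipsiD(I(X,K))+\apsiD(I(X,K))$. Substituting this into the first identity gives
\begin{align*}
	\vfD(I(X,K))=\bt\cdot\bigl(\ipsiD(I(X,K))+\apsiD(I(X,K))\bigr)=\bt\cdot\ipsiD(I(X,K))+\bt\cdot\apsiD(I(X,K)),
\end{align*}
where the last step uses that left multiplication by the fixed invertible element $\bt$ is $K$-linear, hence distributes over the sum of subspaces. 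Finally, replacing $\bt\cdot\ipsiD(I(X,K))$ by $\ivfD(I(X,K))$ via \cref{vfD(A)=u.psiD(A)} yields $\vfD(I(X,K))=\ivfD(I(X,K))+\bt\cdot\apsiD(I(X,K))$, as claimed.

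I do not anticipate a genuine obstacle here: the statement is essentially a formal consequence of combining a change-of-variables isomorphism with an already-established decomposition. The only point requiring a word of care is that $\bt\cdot(-)$ must be checked to carry a subspace sum to the sum of the images, which is immediate from linearity and invertibility of $\bt$; in particular $\bt\cdot\apsiD(I(X,K))$ is a well-defined $K$-subspace of $\vfD(I(X,K))$, since $\apsiD(I(X,K))$ is a subspace of $\psiD(I(X,K))$ and multiplication by $\bt$ is a linear isomorphism $\psiD(I(X,K))\to\vfD(I(X,K))$.
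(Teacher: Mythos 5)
Your proposal is correct and follows exactly the route the paper intends: the corollary is stated as an immediate consequence of \cref{psiD=ipsiD+apsiD} and \cref{vfD(A)=u.psiD(A)}, and your bookkeeping with $\bt\cdot(-)$ distributing over the subspace sum is precisely the omitted verification.
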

	
	\subsection{The description of $\ivfD(I(X,K))\cap\bt\cdot\apsiD(I(X,K))$}
	
	Observe that the sum $\ivfD(I(X,K))+\bt\cdot\apsiD(I(X,K))$ from \cref{vfD=ivfD+bt.avfD} is not direct in general. Our next goal is to describe the intersection $\ivfD(I(X,K))\cap\bt\cdot\apsiD(I(X,K))$. To this end, we first describe the elements of $\ipsiD(I(X,K))\cap\apsiD(I(X,K))$.
	
	\begin{lemma}\label{d_f_aditiva_tau}
		If $\Dpsi_f=L_{\tau}$, then, for any $x\leq y$ in X, 
		$$\tau(x,y)= f(\lb(x),x)-\sg(\lb(x),\lb(y))f(\lb(y),y).$$
	\end{lemma}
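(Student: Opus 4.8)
The plan is to test the operator identity $\Dpsi_f=L_\tau$ on the standard elements $e_{xy}$ with $x\le y$ and to read off $\tau(x,y)$ from a single matrix entry.

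First I would recall from \cref{L_tau(e_xy)} that $L_\tau(e_{xy})=\tau(x,y)e_{\lb(x)y}$ when $\lb(x)\le y$ and $L_\tau(e_{xy})=0$ otherwise, and from \cref{vf(e_xy)} that $\psi(e_{xy})=\sg(\lb(x),\lb(y))e_{\lb(x)\lb(y)}$, so that
\[
\Dpsi_f(e_{xy})=fe_{xy}-\sg(\lb(x),\lb(y))\,e_{\lb(x)\lb(y)}f.
\]
A direct computation of the two products in $I(X,K)$ gives $(fe_{xy})(u,y)=f(u,x)$ for $u\le x$ (and $0$ in the remaining entries) and $(e_{\lb(x)\lb(y)}f)(\lb(x),v)=f(\lb(y),v)$ for $\lb(y)\le v$ (and $0$ in the remaining entries), where I use that $\lb\in\Aut(X)$ so $x\le y$ forces $\lb(x)\le\lb(y)$.

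Then I would split into the cases $\lb(x)\le y$ and $\lb(x)\nleq y$. If $\lb(x)\le y$, I evaluate $\Dpsi_f(e_{xy})=L_\tau(e_{xy})$ at the entry $(\lb(x),y)$: the right-hand side contributes $\tau(x,y)$, while the left-hand side contributes $(fe_{xy})(\lb(x),y)-\sg(\lb(x),\lb(y))(e_{\lb(x)\lb(y)}f)(\lb(x),y)$. Here one uses that $f(\lb(x),x)$ is automatically $0$ when $\lb(x)\nleq x$, so that the first term equals $f(\lb(x),x)$ whether or not $\lb(x)\le x$, and likewise $(e_{\lb(x)\lb(y)}f)(\lb(x),y)$ equals $f(\lb(y),y)$ (it being $0=f(\lb(y),y)$ when $\lb(y)\nleq y$); this yields precisely $\tau(x,y)=f(\lb(x),x)-\sg(\lb(x),\lb(y))f(\lb(y),y)$. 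If instead $\lb(x)\nleq y$, then $\tau(x,y)=0$ by \cref{tau(x_y)=0-for-lb(x)-ne-y}, and since $x\le y$ together with $\lb(x)\nleq y$ forces $\lb(x)\nleq x$ and $\lb(x)\le\lb(y)$ forces $\lb(y)\nleq y$, both $f(\lb(x),x)$ and $f(\lb(y),y)$ vanish, so the identity again holds trivially.

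The only point requiring care --- the main obstacle, such as it is --- is the bookkeeping about comparabilities: one must observe that the entries $f(\lb(x),x)$ and $f(\lb(y),y)$ appearing in the statement are genuine entries of an element of $I(X,K)$ only when $\lb(x)\le x$, respectively $\lb(y)\le y$, and are understood to be $0$ by the definition of the incidence algebra otherwise; this is exactly what lets the two cases merge into the single stated formula.
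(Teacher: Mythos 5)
Your proposal is correct and follows essentially the same route as the paper: both evaluate the identity $\Dpsi_f(e_{xy})=L_\tau(e_{xy})$ at the entry $(\lb(x),y)$ when $\lb(x)\le y$, and dispose of the case $\lb(x)\nleq y$ by noting that then $\tau(x,y)=0=f(\lb(x),x)=f(\lb(y),y)$. The only cosmetic difference is that the paper treats $x=y$ separately via \cref{tau_xyz}, whereas you absorb it into the general computation; your bookkeeping of the vanishing entries of $f$ is exactly the point the paper also relies on.
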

	\begin{proof}
		Suppose $x\le y$. If $\lb(x)\nleq y$, then $\tau(x,y)=0$, by \cref{tau(x_y)=0-for-lb(x)-ne-y}. Moreover, in this case, $\lb(x)\nleq x$ and $\lb(y)\nleq y$, therefore $f(\lb(x),x)-\sg(\lb(x),\lb(y))f(\lb(y),y)=0$. 
		
		If $\lb(x)\leq y$, then, by \cref{vf(e_xy),(b-inv-d(f))(lb(x)_y)=tau(x_y)f(x_y)},
		\begin{align*}
			\tau(x,y) & = \tau(x,y)e_{xy}(x,y)=L_{\tau}(e_{xy})(\lb(x),y)  \\
			& =\Dpsi_f(e_{xy})(\lb(x),y)=(f e_{xy}-\psi(e_{xy})f)(\lb(x),y) \\
			& = (f e_{xy})(\lb(x),y)-\sg(\lb(x),\lb(y))(e_{\lb(x)\lb(y)} f)(\lb(x),y)\\
			& = f(\lb(x),x)-\sg(\lb(x),\lb(y))f(\lb(y),y). 
		\end{align*}
	\end{proof}
	
	\begin{definition}\label{funcao_potencial}
		Let $\sg\in I(X,K)$ be multiplicative, $\lb\in\Aut(X)$ and $\epsilon:X\to K$ such that
		\begin{align}\label{eps(x)=0-for-lb(x)-nleq-x}
			\epsilon(x)=0,\text{ if }\lb(x)\nleq x.    
		\end{align} 
		We define $\tau_{\epsilon}\in I(X,K)$ by
		\begin{align}\label{tau_epsilon-defn}
			\tau_{\epsilon}(x,y) = \epsilon(x)-\sg(\lb(x),\lb(y))\epsilon(y)
		\end{align}
		for all $x\leq y$. Given $\tau\in I(X,K)$, we say that $\tau$ is \emph{$(\sg,\lb)$-potential}, if $\tau=\tau_\epsilon$ for some $\epsilon:X\to K$ satisfying \cref{eps(x)=0-for-lb(x)-nleq-x}.
	\end{definition}
	
	\begin{proposition}
		Every $(\sg,\lb)$-potential $\tau\in I(X,K)$ is $(\sg,\lb)$-additive.
	\end{proposition}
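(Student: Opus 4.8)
The plan is to verify directly that $\tau=\tau_\epsilon$ satisfies the two conditions in \cref{additive_element} defining a $(\sg,\lb)$-additive element of $I(X,K)$, namely \cref{tau(x_y)=0-for-lb(x)-ne-y,tau_xyz}.

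First I would establish \cref{tau(x_y)=0-for-lb(x)-ne-y}. Fix $x\le y$ in $X$ with $\lb(x)\nleq y$. Since $\lb\in\Aut(X)$, from $x\le y$ we obtain $\lb(x)\le\lb(y)$. If $\lb(x)\le x$ held, then $\lb(x)\le x\le y$, contradicting $\lb(x)\nleq y$; hence $\lb(x)\nleq x$ and so $\epsilon(x)=0$ by \cref{eps(x)=0-for-lb(x)-nleq-x}. Likewise, if $\lb(y)\le y$ held, then $\lb(x)\le\lb(y)\le y$, again a contradiction; hence $\lb(y)\nleq y$ and $\epsilon(y)=0$. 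Substituting these into \cref{tau_epsilon-defn} gives $\tau(x,y)=\epsilon(x)-\sg(\lb(x),\lb(y))\epsilon(y)=0$.

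Next I would check \cref{tau_xyz}. For $x\le y\le z$, substituting \cref{tau_epsilon-defn} into the right-hand side,
\[
\tau(x,y)+\sg(\lb(x),\lb(y))\tau(y,z)=\epsilon(x)-\sg(\lb(x),\lb(y))\epsilon(y)+\sg(\lb(x),\lb(y))\bigl(\epsilon(y)-\sg(\lb(y),\lb(z))\epsilon(z)\bigr),
\]
so the terms involving $\epsilon(y)$ cancel and the expression reduces to $\epsilon(x)-\sg(\lb(x),\lb(y))\sg(\lb(y),\lb(z))\epsilon(z)$. As $\lb$ preserves the order, $\lb(x)\le\lb(y)\le\lb(z)$, so multiplicativity of $\sg$, i.e.\ \cref{sg(x_z)=sg(x_y)sg(y_z)}, collapses the product $\sg(\lb(x),\lb(y))\sg(\lb(y),\lb(z))$ to $\sg(\lb(x),\lb(z))$. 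Hence the right-hand side equals $\epsilon(x)-\sg(\lb(x),\lb(z))\epsilon(z)=\tau(x,z)$, which is exactly \cref{tau_xyz}.

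There is no serious obstacle here. The only point requiring a moment's attention is in the first step: one must notice that $\lb(x)\nleq y$ forces $\lb(x)\nleq x$ \emph{and} $\lb(y)\nleq y$ simultaneously, so that both $\epsilon$-values — not merely one of them — vanish. After that observation the argument is a two-line computation relying on the cocycle identity \cref{sg(x_z)=sg(x_y)sg(y_z)} for $\sg$ together with the order-preservation of $\lb$.
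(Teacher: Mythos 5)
Your proof is correct and follows essentially the same route as the paper: deduce $\lb(x)\nleq x$ and $\lb(y)\nleq y$ from $\lb(x)\nleq y$ to get \cref{tau(x_y)=0-for-lb(x)-ne-y}, then verify \cref{tau_xyz} by a telescoping computation using \cref{sg(x_z)=sg(x_y)sg(y_z)}. The only (harmless) omission is the trivial case $x\nleq y$ in the first condition, which the paper dispatches by noting $\tau_\epsilon\in I(X,K)$.
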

	\begin{proof}
		Take $\epsilon:X\to K$ satisfying \cref{eps(x)=0-for-lb(x)-nleq-x}. Let us show that $\tau_{\epsilon}$ is $(\sg,\lb)$-additive. 
		
		Let $x,y\in X$ such that $\lb(x)\nleq y$. If $x\nleq y$, then $\tau_{\epsilon}(x,y)=0$ because $\tau_{\epsilon}\in I(X,K)$, and if $x\leq y$, then $\lb(x)\nleq x$ and $\lb(y)\nleq y$. Hence $\epsilon(x)=\epsilon(y)=0$ and $\tau_{\epsilon}(x,y)=0$. Thus \cref{tau(x_y)=0-for-lb(x)-ne-y} is satisfied for $\tau=\tau_{\epsilon}$. Let $x\le y\le z$. Let us show that \cref{tau_xyz} also holds. Using \cref{sg(x_z)=sg(x_y)sg(y_z)}, we have
		\begin{align*}
			\tau_{\epsilon}(x,y)+\sg(\lb(x),\lb(y))\tau_{\epsilon}(y,z) & = \epsilon(x)-\sg(\lb(x),\lb(y))\epsilon(y)\\
			& \quad +\sg(\lb(x),\lb(y))[\epsilon(y)-\sg(\lb(y),\lb(z))\epsilon(z)] \\
			& = \epsilon(x)-\sg(\lb(x),\lb(z))\epsilon(z)= \tau_{\epsilon}(x,z),
		\end{align*}
		as desired.
	\end{proof}

	Observe that $(\sg,\lb)$-potential elements form a $K$-subspace of the space of $(\sg,\lb)$-additive elements.

	\begin{definition}
		An additive $\psi$-derivation $L_{\tau}$ such that $\tau$ is $(\sg,\lb)$-potential will be called \emph{potential}.
	\end{definition}
	
	Denote by $\ppsiD(I(X,K))$ the subspace of all potential $\psi$-derivations of $I(X,K)$.
	
	\begin{theorem}\label{ppsiD=ipsiD-cap-apsiD}
		Let $\psi=M_\sg\circ \hat\lb$. Then $\ppsiD(I(X,K))=\ipsiD(I(X,K))\cap\apsiD(I(X,K))$.
	\end{theorem}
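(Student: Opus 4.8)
The plan is to prove the two inclusions separately, relying on \cref{d_f_aditiva_tau} together with the explicit descriptions of $L_\tau$ from \cref{(sg_lb)-additive} and of $\tau_\epsilon$ from \cref{funcao_potencial}.

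\textbf{The inclusion $\ipsiD(I(X,K))\cap\apsiD(I(X,K))\sst\ppsiD(I(X,K))$.} Take $d$ in the intersection. Being inner, $d=\Dpsi_f$ for some $f\in I(X,K)$; being additive, $d=L_\tau$ for some $(\sg,\lb)$-additive $\tau\in I(X,K)$. Since $\Dpsi_f=L_\tau$, \cref{d_f_aditiva_tau} gives $\tau(x,y)=f(\lb(x),x)-\sg(\lb(x),\lb(y))f(\lb(y),y)$ for all $x\le y$. I would set $\epsilon(x):=f(\lb(x),x)$ for every $x\in X$; as $f\in I(X,K)$, this forces $\epsilon(x)=0$ whenever $\lb(x)\nleq x$, so $\epsilon$ satisfies \cref{eps(x)=0-for-lb(x)-nleq-x}. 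By \cref{tau_epsilon-defn} we get $\tau(x,y)=\tau_\epsilon(x,y)$ for all $x\le y$, and both sides vanish for $x\nleq y$ since they lie in $I(X,K)$; hence $\tau=\tau_\epsilon$ is $(\sg,\lb)$-potential and $d=L_{\tau_\epsilon}\in\ppsiD(I(X,K))$.

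\textbf{The inclusion $\ppsiD(I(X,K))\sst\ipsiD(I(X,K))\cap\apsiD(I(X,K))$.} Let $d=L_{\tau_\epsilon}$ with $\epsilon:X\to K$ satisfying \cref{eps(x)=0-for-lb(x)-nleq-x}. Since $\tau_\epsilon$ is $(\sg,\lb)$-additive, $d\in\apsiD(I(X,K))$ already, so only $d\in\ipsiD(I(X,K))$ needs proof. I would define $f\in I(X,K)$ by $f(u,v)=\epsilon(v)$ if $u=\lb(v)$, and $f(u,v)=0$ otherwise; this indeed lies in $I(X,K)$, because $f(u,v)\neq0$ forces $u=\lb(v)$ with $\lb(v)\le v$ (when $\lb(v)\nleq v$ one has $\epsilon(v)=0$ by \cref{eps(x)=0-for-lb(x)-nleq-x}). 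The claim is that $\Dpsi_f=L_{\tau_\epsilon}$. To verify it, fix $g\in I(X,K)$ and $x,y\in X$ with $\lb(x)\le y$. In $(fg)(\lb(x),y)=\sum_{\lb(x)\le z\le y}f(\lb(x),z)g(z,y)$ the factor $f(\lb(x),z)$ is nonzero only for $z=x$, so $(fg)(\lb(x),y)=\epsilon(x)g(x,y)$; in $(\psi(g)f)(\lb(x),y)=\sum_{\lb(x)\le z\le y}\psi(g)(\lb(x),z)f(z,y)$ the factor $f(z,y)$ is nonzero only for $z=\lb(y)$, and then $\psi(g)(\lb(x),\lb(y))=\sg(\lb(x),\lb(y))g(x,y)$ by \cref{vf(f)(x_y)}, so $(\psi(g)f)(\lb(x),y)=\sg(\lb(x),\lb(y))\epsilon(y)g(x,y)$. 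Subtracting and using \cref{tau_epsilon-defn} yields $\Dpsi_f(g)(\lb(x),y)=[\epsilon(x)-\sg(\lb(x),\lb(y))\epsilon(y)]g(x,y)=L_{\tau_\epsilon}(g)(\lb(x),y)$. For $\lb(x)\nleq y$ both values are $0$: the left one because $\Dpsi_f(g)\in I(X,K)$, the right one because $L_{\tau_\epsilon}(g)(\lb(x),y)=0$ by the definition of $L_{\tau_\epsilon}$ together with \cref{tau(x_y)=0-for-lb(x)-ne-y}. Since $\lb$ is a bijection of $X$, this gives $\Dpsi_f=L_{\tau_\epsilon}=d$, so $d\in\ipsiD(I(X,K))$.

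The computations above are elementary convolutions in $I(X,K)$; the only point deserving care is the bookkeeping in the second inclusion — checking that exactly one term survives in each sum and that the stated identities persist in the degenerate cases $\lb(x)\nleq x$, $\lb(y)\nleq y$, or $x\nleq y$, all of which is absorbed by \cref{eps(x)=0-for-lb(x)-nleq-x}. If one prefers to avoid these convolutions, an alternative for the second inclusion is to verify that $\Dpsi_f(e_x)=0$ for all $x\in X$, conclude from \cref{d(e_x)=0iff} that $\Dpsi_f=L_{\tau'}$ for some $(\sg,\lb)$-additive $\tau'$, and then identify $\tau'=\tau_\epsilon$ via \cref{d_f_aditiva_tau}.
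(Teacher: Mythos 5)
Your proposal is correct and follows essentially the same route as the paper: the first inclusion via \cref{d_f_aditiva_tau} with $\epsilon(x)=f(\lb(x),x)$, and the second via the same element supported on the pairs $(\lb(v),v)$ (the paper calls it $h$) together with the same convolution computation. The only cosmetic difference is that you also sketch an alternative finish through \cref{d(e_x)=0iff}, which the paper does not need.
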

	\begin{proof}
		Let $d\in \ipsiD(I(X,K))\cap\apsiD(I(X,K))$. Then there exists $f\in I(X,K)$ and a $(\sg,\lb)$-additive $\tau\in I(X,K)$ such that $d=\Dpsi_f=L_{\tau}$, and $\tau$ is as in \cref{d_f_aditiva_tau}. Then define $\epsilon: X\to K$ by $\epsilon(x)= f(\lb(x),x)$. If $\lb(x)\nleq x$, then $\epsilon(x)=0$. Moreover, 
		$$
		\tau(x,y) = \epsilon(x)-\sg(\lb(x),\lb(y))\epsilon(y)=\tau_{\epsilon}(x,y),
		$$ 
		for all $x\leq y$ in $X$. Thus, $\tau=\tau_{\epsilon}$ is $(\sg,\lb)$-potential and therefore $d\in \ppsiD(I(X,K))$.
		
		Conversely, let $d\in \ppsiD(I(X,K))$. Then there exists a $(\sg,\lb)$-potential $\tau_{\epsilon}$ such that $d=L_{\tau_{\epsilon}}$. We only need to prove that $d\in \ipsiD(I(X,K))$. 
		
		Let $h:X\times X\to K$ be defined by 
		$$h(x,y)=\begin{cases}
			\epsilon(y), & \text{if } x=\lb(y),\\
			0, & \text{otherwise}.
		\end{cases}$$
		Note that if $\lb(y)\nleq y$, then $h(\lb(y),y)= \epsilon(y)=0$ by \cref{eps(x)=0-for-lb(x)-nleq-x}, thus $h\in I(X,K)$. 
		
		Let $f\in I(X,K)$ and $x,y\in X$ with $\lb(x)\leq y$. Then
		\begin{align}
			D_h^\psi(f)(\lb(x),y) & = (hf)(\lb(x),y)-(\psi(f)h)(\lb(x),y) \nonumber \\
			& = \sum_{\lb(x)\leq z\leq y} h(\lb(x),z)f(z,y)-\sum_{\lb(x)\leq z\leq y}\psi(f)(\lb(x),z)h(z,y). \label{naosei}
		\end{align}
		If $\lb(x)\nleq x$, then $h(\lb(x),z)=0$ for all $z\in X$ such that $\lb(x)\leq z\leq y$, therefore the first sum in \cref{naosei} is zero, and if $\lb(x)\leq x$, then it is equal to $h(\lb(x),x)f(x,y)=\epsilon(x)f(x,y)$. Analogously, if $\lb(y)\nleq y$, then $h(z,y)=0$ for all $z\in X$ such that $\lb(x)\leq z\leq y$, therefore the second sum in \cref{naosei} is zero, and it is equal to $\psi(f)(\lb(x),\lb(y))h(\lb(y),y)=\psi(f)(\lb(x),\lb(y))\epsilon(y)$ otherwise. Thus, since $\epsilon(x)=0$ if $\lb(x)\nleq x$ and $\epsilon(y)=0$ if $\lb(y)\nleq y$ by \cref{eps(x)=0-for-lb(x)-nleq-x}, then by \cref{vf(f)(x_y),tau_epsilon-defn,(b-inv-d(f))(lb(x)_y)=tau(x_y)f(x_y)},
		\begin{align*}
			D_h^\psi(f)(\lb(x),y) & = \epsilon(x)f(x,y)-\psi(f)(\lb(x),\lb(y))\epsilon(y) \\
			& = \epsilon(x)f(x,y)-\sigma(\lb(x),\lb(y))f(x,y)\epsilon(y) \\
			& = \tau_{\epsilon}(x,y)f(x,y) = L_{\tau_{\epsilon}}(f)(\lb(x),y).                    
		\end{align*}
		Since $\lb\in \Aut(X)$, it follows that $ L_{\tau_{\epsilon}}(f)= D_h^\psi(f)$ and, therefore, $d=L_{\tau_{\epsilon}}=D_h^\psi \in \ipsiD(I(X,K))$. 
	\end{proof}
	
	\begin{corollary}\label{pvfD=ivfD-cap-avfD}
		Let $\vf=\xi_\bt\circ \psi$ with $\psi=M_\sg\circ \hat\lb$. Then $\bt\cdot\ppsiD(I(X,K))=\ivfD(I(X,K))\cap\bt\cdot\apsiD(I(X,K))$.
	\end{corollary}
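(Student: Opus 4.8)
The plan is to obtain this as a formal consequence of \cref{ppsiD=ipsiD-cap-apsiD} and \cref{vfD(A)=u.psiD(A)}, the only extra ingredient being that left multiplication by the invertible element $\bt$ is a $K$-linear bijection of $I(X,K)$.

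First I would note that, since $\bt$ is invertible in $I(X,K)$, the map $f\mapsto\bt f$ is a bijective $K$-linear operator on $I(X,K)$ with inverse $g\mapsto\bt\m g$. In particular it preserves intersections of subspaces: for any $K$-subspaces $V,W\sst I(X,K)$ we have $\bt\cdot(V\cap W)=(\bt\cdot V)\cap(\bt\cdot W)$. The inclusion $\sst$ is clear; conversely, if $g\in(\bt\cdot V)\cap(\bt\cdot W)$, then $\bt\m g$ lies in both $V$ and $W$, hence $g=\bt(\bt\m g)\in\bt\cdot(V\cap W)$.

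Then I would combine the three facts. Applying the above with $V=\ipsiD(I(X,K))$ and $W=\apsiD(I(X,K))$ and using \cref{ppsiD=ipsiD-cap-apsiD},
\[
\bt\cdot\ppsiD(I(X,K))=\bt\cdot\bigl(\ipsiD(I(X,K))\cap\apsiD(I(X,K))\bigr)=\bigl(\bt\cdot\ipsiD(I(X,K))\bigr)\cap\bigl(\bt\cdot\apsiD(I(X,K))\bigr).
\]
Finally, since $\vf=\xi_\bt\circ\psi$, \cref{vfD(A)=u.psiD(A)} with $u=\bt$ gives $\bt\cdot\ipsiD(I(X,K))=\ivfD(I(X,K))$; substituting this into the right-hand side yields $\bt\cdot\ppsiD(I(X,K))=\ivfD(I(X,K))\cap\bt\cdot\apsiD(I(X,K))$, as required.

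I do not expect any real obstacle: the only point requiring a moment's care is that distributivity of $\bt\cdot(-)$ over $\cap$ genuinely uses the injectivity of multiplication by $\bt$ (the inclusion $\supseteq$ can fail for a noninvertible multiplier), and this is exactly what the hypothesis on $\bt$ guarantees. So the corollary is purely a bookkeeping translation of \cref{ppsiD=ipsiD-cap-apsiD} through the dictionary provided by \cref{vfD(A)=u.psiD(A)}.
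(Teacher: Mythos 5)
Your proof is correct and is exactly the deduction the paper intends (the corollary is stated without proof, as an immediate consequence of \cref{ppsiD=ipsiD-cap-apsiD} and \cref{vfD(A)=u.psiD(A)}). Your explicit observation that left multiplication by the invertible $\bt$ distributes over intersections of subspaces is the right, and only, point needing verification.
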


	\section{Poset skew cohomology}\label{sec-cohom}
	
	\subsection{The construction}
	\cref{pvfD=ivfD-cap-avfD,vfD=ivfD+bt.avfD} show that the quotient of the space $\vfD(I(X,K))$ by the subspace $\ivfD(I(X,K))$ admits a cohomological interpretation. In this section we introduce the corresponding cohomology.
	
	Fix $\lb\in\Aut(X)$. For an arbitrary integer $n>0$ denote
	\begin{align*}
		X^n_\le&=\{(x_0,\dots,x_{n-1})\in X^n: x_0\le x_1\le\dots \le x_{n-1}\}.
	\end{align*}
	Further, for any integer $n\ge 0$ introduce the following $K$-spaces:
	\begin{align*}
		C^n(X,K)&=\{f:X^{n+1}_\le\to K\},\\
		C^n_\lb(X,K)&=\{f\in C^n(X,K): f(x_0,\dots,x_n)=0\text{ if }\lb(x_0)\nleq x_n\}.
	\end{align*}
	The elements of $C^n(X,K)$ (resp.~$C^n_\lb(X,K)$) will be called \textit{cochains} (resp.~{\it $\lb$-cochains}) {\it of degree $n$} of $X$ with values in $K$. We will usually omit ``with values in $K$'', when $K$ is clear from the context. 
	
	Fix, moreover, a multiplicative $\sg\in I(X,K)$. Define $\dl^n_{(\sg,\lb)}:C^n(X,K)\to C^{n+1}(X,K)$, $f\mapsto\dl^n_{(\sg,\lb)}f$, as follows:
	\begin{align}
		(\dl^n_{(\sg,\lb)}f)(x_0,\dots,x_{n+1})&=\sg(\lb(x_0),\lb(x_1))f(x_1,\dots,x_{n+1})\notag\\
		&\quad+\sum_{i=1}^{n+1}(-1)^i f(x_0,\dots,\wh{x_i},\dots,x_{n+1}),\label{dl^nf(x_1...x_(n+1))}
	\end{align}
	where $x_0,\dots,\wh{x_i},\dots,x_{n+1}$ means $x_0,\dots,x_{n+1}$ with $x_i$ removed.
	
	\begin{lemma}\label{dl^n_lb(C^n_lb)-sst-C^(n+1)_lb}
		For all $n\ge 0$ we have $\dl^n_{(\sg,\lb)}(C^n_\lb(X,K))\sst C^{n+1}_\lb(X,K)$.
	\end{lemma}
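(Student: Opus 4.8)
The plan is to verify the defining inclusion directly from the formula \cref{dl^nf(x_1...x_(n+1))}. Fix $f\in C^n_\lb(X,K)$ and a tuple $(x_0,\dots,x_{n+1})\in X^{n+2}_\le$ with $\lb(x_0)\nleq x_{n+1}$; the goal is to show that $(\dl^n_{(\sg,\lb)}f)(x_0,\dots,x_{n+1})=0$, and for this it suffices to check that each of the $n+2$ summands on the right-hand side of \cref{dl^nf(x_1...x_(n+1))} is zero. The only tools needed are the two monotonicity facts $\lb(x_0)\le\lb(x_i)$ for every $i$ (because $x_0\le x_i$ and $\lb\in\Aut(X)$) and $x_j\le x_{n+1}$ for every $j$, together with transitivity of $\le$ and the hypothesis that $f$ is a $\lb$-cochain.

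For the leading term $\sg(\lb(x_0),\lb(x_1))f(x_1,\dots,x_{n+1})$: the argument of $f$ is the chain $(x_1,\dots,x_{n+1})$, whose endpoints are $x_1$ and $x_{n+1}$. If we had $\lb(x_1)\le x_{n+1}$, then $\lb(x_0)\le\lb(x_1)\le x_{n+1}$ would contradict the hypothesis; hence $\lb(x_1)\nleq x_{n+1}$ and $f(x_1,\dots,x_{n+1})=0$ by the $\lb$-cochain condition. For the terms $(-1)^i f(x_0,\dots,\wh{x_i},\dots,x_{n+1})$ with $1\le i\le n$: deleting $x_i$ here leaves the endpoints $x_0$ and $x_{n+1}$ unchanged, so the hypothesis $\lb(x_0)\nleq x_{n+1}$ applies verbatim and the corresponding value of $f$ vanishes. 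Finally, for $i=n+1$ the face is $(x_0,\dots,x_n)$, with endpoints $x_0$ and $x_n$; since $x_n\le x_{n+1}$, a relation $\lb(x_0)\le x_n$ would give $\lb(x_0)\le x_n\le x_{n+1}$, again impossible, so $\lb(x_0)\nleq x_n$ and $f(x_0,\dots,x_n)=0$. Thus every summand vanishes, giving $\dl^n_{(\sg,\lb)}f\in C^{n+1}_\lb(X,K)$.

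This computation is entirely routine; there is no real obstacle beyond the bookkeeping of correctly identifying the first and last coordinates of each face appearing in \cref{dl^nf(x_1...x_(n+1))} and invoking transitivity in the right direction. The case $n=0$ is covered by the same argument, the only faces being $(x_1)$ and $(x_0)$, for which one checks $\lb(x_1)\nleq x_1$ and $\lb(x_0)\nleq x_0$ respectively.
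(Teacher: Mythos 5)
Your proof is correct and follows essentially the same route as the paper's: in both cases one shows $\lb(x_1)\nleq x_{n+1}$ for the leading face and $\lb(x_0)\nleq x_n$ for the last face by transitivity, while the intermediate faces retain the endpoints $x_0,x_{n+1}$ so the hypothesis applies directly. Your write-up is just a more explicit, face-by-face version of the same argument.
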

	\begin{proof}
		Let $(x_0,\dots,x_{n+1})\in X^{n+2}_\le$ with $\lb(x_0)\nleq x_{n+1}$. Then $\lb(x_1)\nleq x_{n+1}$, since otherwise $\lb(x_0)\le \lb(x_1)\le x_{n+1}$, and $\lb(x_0)\nleq x_n$, since otherwise $\lb(x_0)\leq x_n\le x_{n+1}$. Hence, for any $f\in C^n_\lb(X,K)$ and $1\le i\le n+1$, one has
		\begin{align*}
			f(x_1,\dots,x_{n+1})=f(x_0,\dots,\wh{x_i},\dots,x_{n+1})=0,
		\end{align*}
		so $(\dl^n_{(\sg,\lb)}f)(x_0,\dots,x_{n+1})=0$ by \cref{dl^nf(x_1...x_(n+1))}.
	\end{proof}
	
	To prove that $\dl^{n+1}_{(\sg,\lb)}\circ \dl^n_{(\sg,\lb)}=0$, we use the ideas from \cite{Loday92}. For any $0\le i\le n+1$ define $\dl^n_i:C^n(X,K)\to C^{n+1}(X,K)$, $f\mapsto \dl^n_if$, by
	\begin{align*}
		(\dl^n_0f)(x_0,\dots,x_{n+1})&=\sg(\lb(x_0),\lb(x_1))f(x_1,\dots,x_{n+1}),\\
		(\dl^n_if)(x_0,\dots,x_{n+1})&=f(x_0,\dots,\wh{x_i},\dots,x_{n+1}),\ 1\le i\le n+1.
	\end{align*}
	
	\begin{lemma}\label{dl_i.dl_j=dl_(j+1).dl_i}
		For all $n\ge 0$ and $0\le i\le j\le n+1$ we have $\dl^{n+1}_i\circ \dl^n_j=\dl^{n+1}_{j+1}\circ \dl^n_i$.	
	\end{lemma}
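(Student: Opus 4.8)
The plan is to prove the identity by evaluating both composites on an arbitrary chain $x_0\le x_1\le\dots\le x_{n+2}$ in $X$ and splitting into three cases according to whether the indices $i$ and $j$ vanish. The whole point is that the twisting scalar $\sg(\lb(-),\lb(-))$ enters only through the operators $\dl^{n+1}_0$ and $\dl^n_0$, so once the index $0$ is absent the asserted relation reduces to the classical cosimplicial face identity.

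\textbf{Case $i\ge 1$.} Then $j\ge i\ge 1$ and $j+1\ge 2$, so none of $\dl^{n+1}_i$, $\dl^n_j$, $\dl^{n+1}_{j+1}$, $\dl^n_i$ carries a $\sg$-factor. I would check directly that both sides, applied to $f$ at $(x_0,\dots,x_{n+2})$, equal $f$ evaluated at the subtuple obtained by deleting the entries in positions $i$ and $j+1$: on the left, the outer $\dl^{n+1}_i$ deletes $x_i$ and relabels so that position $k$ then carries $x_k$ for $k<i$ and $x_{k+1}$ for $k\ge i$, whence the inner $\dl^n_j$ (using $j\ge i$) deletes $x_{j+1}$; on the right, the outer $\dl^{n+1}_{j+1}$ deletes $x_{j+1}$ and leaves the entries in positions $\le j$ untouched, whence the inner $\dl^n_i$ (using $i\le j$) deletes $x_i$. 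This is pure index bookkeeping.

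\textbf{The cases with $i=0$.} If $i=0<j$, then on the left $(\dl^{n+1}_0(\dl^n_jf))(x_0,\dots,x_{n+2})=\sg(\lb(x_0),\lb(x_1))\,(\dl^n_jf)(x_1,\dots,x_{n+2})$, and since $j\ge 1$ the inner deletion removes $x_{j+1}$ while leaving $x_0,x_1$ alone; on the right, since $j+1\ge 2$, $\dl^{n+1}_{j+1}$ removes $x_{j+1}$ while leaving $x_0,x_1$ alone, so the outer application of $\dl^n_0$ contributes the same prefactor $\sg(\lb(x_0),\lb(x_1))$ and the same value of $f$, and the two sides coincide. If $i=j=0$, the identity reads $\dl^{n+1}_0\circ\dl^n_0=\dl^{n+1}_1\circ\dl^n_0$; here the left side is $\sg(\lb(x_0),\lb(x_1))\sg(\lb(x_1),\lb(x_2))\,f(x_2,\dots,x_{n+2})$ and the right side is $\sg(\lb(x_0),\lb(x_2))\,f(x_2,\dots,x_{n+2})$, and these agree because $\lb\in\Aut(X)$ forces $\lb(x_0)\le\lb(x_1)\le\lb(x_2)$, so \cref{sg(x_z)=sg(x_y)sg(y_z)} equates the two scalars. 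This last case is the only one in which multiplicativity of $\sg$ is invoked.

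The only genuinely non-formal ingredient is the use of \cref{sg(x_z)=sg(x_y)sg(y_z)} in the case $i=j=0$; everything else is standard cosimplicial index manipulation. The main (and very mild) obstacle is keeping the relabelling after a deletion straight in the first case, where the hypothesis $i\le j$ is precisely what makes the two bookkeepings match.
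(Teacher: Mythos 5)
Your proposal is correct and follows essentially the same route as the paper: the same three-case split ($0=i=j$, $0=i<j$, $0<i\le j$), the same index bookkeeping showing both composites delete $x_i$ and $x_{j+1}$, and the same single use of the multiplicativity relation \cref{sg(x_z)=sg(x_y)sg(y_z)} in the case $i=j=0$.
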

	\begin{proof}
		Take arbitrary $f\in C^n(X,K)$ and $(x_0,\dots,x_{n+2})\in X^{n+3}_\le$.
		
		\textit{Case 1.} $0=i=j$. Then
		\begin{align*}
			(\dl^{n+1}_i\dl^n_jf)(x_0,\dots,x_{n+2})&=\sg(\lb(x_0),\lb(x_1))(\dl^n_jf)(x_1,\dots,x_{n+2})\\
			&=\sg(\lb(x_0),\lb(x_1))\sg(\lb(x_1),\lb(x_2))f(x_2,\dots,x_{n+2})\\
			&=\sg(\lb(x_0),\lb(x_2))f(x_2,\dots,x_{n+2})\\
			&=(\dl^n_if)(x_0,x_2,\dots,x_{n+2})\\
			&=(\dl^{n+1}_{j+1}\dl^n_if)(x_0,\dots,x_{n+2}).
		\end{align*}
		
		\textit{Case 2.} $0=i<j$. Then
		\begin{align*}
			(\dl^{n+1}_i\dl^n_jf)(x_0,\dots,x_{n+2})&=\sg(\lb(x_0),\lb(x_1))(\dl^n_jf)(x_1,\dots,x_{n+2})\\
			&=\sg(\lb(x_0),\lb(x_1))f(x_1,\dots,\wh{x_{j+1}},\dots,x_{n+2})\\
			&=(\dl^n_if)(x_0,\dots,\wh{x_{j+1}},\dots,x_{n+2})\\
			&=(\dl^{n+1}_{j+1}\dl^n_if)(x_0,\dots,x_{n+2}).
		\end{align*}
		
		\textit{Case 3.} $0<i\le j$. Then
		\begin{align*}
			(\dl^{n+1}_i\dl^n_jf)(x_0,\dots,x_{n+2})&=(\dl^n_jf)(x_0,\dots,\wh{x_i},\dots,x_{n+2})\\
			&=f(x_0,\dots,\wh{x_i},\dots,\wh{x_{j+1}},\dots,x_{n+2})\\
			&=(\dl^n_if)(x_0,\dots,\wh{x_{j+1}},\dots,x_{n+2})\\
			&=(\dl^{n+1}_{j+1}\dl^n_if)(x_0,\dots,x_{n+2}).
		\end{align*}
	\end{proof}
	
	\begin{lemma}\label{dl^(n+1)_(sg_lb)-circ-dl^n_(sg_lb)=0}
		For all $n\ge 0$ we have $\dl^{n+1}_{(\sg,\lb)}\circ \dl^n_{(\sg,\lb)}=0$.
	\end{lemma}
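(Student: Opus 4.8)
The standard way to deduce $\dl^{n+1}_{(\sg,\lb)}\circ\dl^n_{(\sg,\lb)}=0$ from the face-map relations of \cref{dl_i.dl_j=dl_(j+1).dl_i} is the simplicial argument à la \cite{Loday92}. First I would observe that, by definition, $\dl^n_{(\sg,\lb)}=\sum_{i=0}^{n+1}(-1)^i\dl^n_i$, so that
\begin{align*}
	\dl^{n+1}_{(\sg,\lb)}\circ\dl^n_{(\sg,\lb)}=\sum_{j=0}^{n+1}\sum_{i=0}^{n+2}(-1)^{i+j}\dl^{n+1}_i\circ\dl^n_j.
\end{align*}
Then I would split the double sum into the part with $i\le j$ and the part with $i>j$. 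In the second part, write $i=j'+1$ and $j=i'$ with $i'\le j'$; by \cref{dl_i.dl_j=dl_(j+1).dl_i} we have $\dl^{n+1}_{j'+1}\circ\dl^n_{i'}=\dl^{n+1}_{i'}\circ\dl^n_{j'}$, and the sign $(-1)^{i+j}=(-1)^{j'+1+i'}=-(-1)^{i'+j'}$ is the negative of the sign of the corresponding term in the first part. Hence the two parts cancel term by term and the whole sum is $0$.

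**Organization of the steps.** Concretely: (1) record the identity $\dl^n_{(\sg,\lb)}=\sum_{i=0}^{n+1}(-1)^i\dl^n_i$, which is immediate from \cref{dl^nf(x_1...x_(n+1))} and the definitions of the $\dl^n_i$; (2) expand the composite as the double sum above; (3) re-index the terms with $i>j$ via the substitution just described; (4) apply \cref{dl_i.dl_j=dl_(j+1).dl_i} to match them with the terms $i\le j$ and check the signs are opposite; (5) conclude that $\dl^{n+1}_{(\sg,\lb)}\circ\dl^n_{(\sg,\lb)}f=0$ for every $f\in C^n(X,K)$, hence the operators compose to zero. Together with \cref{dl^n_lb(C^n_lb)-sst-C^(n+1)_lb}, this shows that $(C^\bullet_\lb(X,K),\dl^\bullet_{(\sg,\lb)})$ is a cochain complex, which is presumably the next point in the paper.

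**Expected obstacle.** There is no deep obstacle here — the content is entirely in the face relations already proved in \cref{dl_i.dl_j=dl_(j+1).dl_i}, and what remains is the classical sign-bookkeeping that makes the alternating sum of faces square to zero. The one place to be careful is the exact range of summation: $\dl^n_{(\sg,\lb)}\colon C^n\to C^{n+1}$ uses faces $\dl^n_0,\dots,\dl^n_{n+1}$, while $\dl^{n+1}_{(\sg,\lb)}\colon C^{n+1}\to C^{n+2}$ uses $\dl^{n+1}_0,\dots,\dl^{n+1}_{n+2}$, so one must verify that the re-indexing $i=j'+1$ stays within the admissible ranges (it does: $0\le i'\le j'\le n+1$ corresponds to $0\le j'\le n+1$ and $1\le j'+1\le n+2$). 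A clean way to present the computation is to do it pointwise, evaluating both sides on an arbitrary $(x_0,\dots,x_{n+2})\in X^{n+3}_\le$, exactly as in the proof of \cref{dl_i.dl_j=dl_(j+1).dl_i}.
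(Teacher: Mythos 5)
Your proposal is correct and follows exactly the paper's own argument: expand both differentials as alternating sums of the face maps $\dl^n_i$, split the double sum into the parts $i\le j$ and $i>j$, re-index the latter via $(j,i-1)\mapsto(i',j')$, and apply \cref{dl_i.dl_j=dl_(j+1).dl_i} to see the two halves cancel with opposite signs. The range check you flag is the same one the paper handles by noting $0\le j<i\le n+2\iff 0\le j\le i-1\le n+1$, so nothing further is needed.
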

	\begin{proof}
		Observe that $\dl^{n+1}_{(\sg,\lb)}=\sum_{i=0}^{n+2}(-1)^i\dl^{n+1}_i$ and $\dl^n_{(\sg,\lb)}=\sum_{j=0}^{n+1}(-1)^j\dl^n_j$, so
		\begin{align}
			\dl^{n+1}_{(\sg,\lb)}\circ \dl^n_{(\sg,\lb)}&=\sum_{i=0}^{n+2}\sum_{j=0}^{n+1}(-1)^{i+j}\dl^{n+1}_i\circ \dl^n_j\notag\\
			&=\sum_{0\le i\le j\le n+1}(-1)^{i+j}\dl^{n+1}_i\circ \dl^n_j+\sum_{0\le j<i\le n+2}(-1)^{i+j}\dl^{n+1}_i\circ \dl^n_j.\label{sum_i<=j+sum_j>i}
		\end{align}
		Now, $0\le j<i\le n+2\iff 0\le j\le i-1\le n+1$, so in the second sum of \cref{sum_i<=j+sum_j>i} replace $(j,i-1)$ by $(i',j')$. Then in view of \cref{dl_i.dl_j=dl_(j+1).dl_i} we obtain
		\begin{align*}
			\sum_{0\le j<i\le n+2}(-1)^{i+j}\dl^{n+1}_i\circ \dl^n_j&=\sum_{0\le i'\le j'\le n+1}(-1)^{i'+j'+1}\dl^{n+1}_{j'+1}\circ \dl^n_{i'}\\
			&=-\sum_{0\le i'\le j'\le n+1}(-1)^{i'+j'}\dl^{n+1}_{i'}\circ \dl^n_{j'}.
		\end{align*}
		Thus, we see that \cref{sum_i<=j+sum_j>i} is zero.
	\end{proof}
	
	As a consequence of \cref{dl^n_lb(C^n_lb)-sst-C^(n+1)_lb,dl^(n+1)_(sg_lb)-circ-dl^n_(sg_lb)=0} we obtain the following.
	\begin{theorem}
		The sequence 
		\begin{align}\label{C^1_lb(X_K)->C^2_lb(X_K)->...}
			C^0_{\lb}(X,K)\overset{\dl^0_{(\sg,\lb)}}{\longrightarrow}C^1_{\lb}(X,K)\overset{\dl^1_{(\sg,\lb)}}{\longrightarrow}\cdots
		\end{align}
		is a cochain complex of $K$-spaces.
	\end{theorem}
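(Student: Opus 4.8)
The statement to prove is that the sequence
\begin{align*}
	C^0_{\lb}(X,K)\overset{\dl^0_{(\sg,\lb)}}{\longrightarrow}C^1_{\lb}(X,K)\overset{\dl^1_{(\sg,\lb)}}{\longrightarrow}\cdots
\end{align*}
is a cochain complex of $K$-spaces. This is essentially an assembly of the preceding three lemmas.

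The plan is as follows. First, I would note that each $\dl^n_{(\sg,\lb)}:C^n(X,K)\to C^{n+1}(X,K)$ is $K$-linear; this is immediate from the defining formula \cref{dl^nf(x_1...x_(n+1))}, since it is a $K$-linear combination of evaluations of $f$ (with the coefficient $\sg(\lb(x_0),\lb(x_1))$ in the leading term not depending on $f$), so linearity requires no real argument. Second, by \cref{dl^n_lb(C^n_lb)-sst-C^(n+1)_lb} the map $\dl^n_{(\sg,\lb)}$ restricts to a $K$-linear map $C^n_\lb(X,K)\to C^{n+1}_\lb(X,K)$, so the sequence is at least a sequence of $K$-spaces and $K$-linear maps of the required shape. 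Third, for it to be a cochain complex we need consecutive compositions to vanish, i.e.\ $\dl^{n+1}_{(\sg,\lb)}\circ\dl^n_{(\sg,\lb)}=0$ as maps $C^n_\lb(X,K)\to C^{n+2}_\lb(X,K)$; but this follows at once from \cref{dl^(n+1)_(sg_lb)-circ-dl^n_(sg_lb)=0}, which proves the identity on all of $C^n(X,K)$ and hence in particular on the subspace $C^n_\lb(X,K)$.

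Since all the technical work has already been done, the proof is just the observation that these three facts together are exactly the definition of a cochain complex. There is no genuine obstacle here — the only thing to be careful about is making explicit that $\dl^n_{(\sg,\lb)}$ really does map $\lb$-cochains to $\lb$-cochains (so that the restricted maps in \cref{C^1_lb(X_K)->C^2_lb(X_K)->...} are well defined), which is precisely the content of \cref{dl^n_lb(C^n_lb)-sst-C^(n+1)_lb}, and that the composition identity, proved on the ambient spaces in \cref{dl^(n+1)_(sg_lb)-circ-dl^n_(sg_lb)=0}, transfers to the subspaces by restriction. Thus I expect the proof to be a single short paragraph citing \cref{dl^n_lb(C^n_lb)-sst-C^(n+1)_lb,dl^(n+1)_(sg_lb)-circ-dl^n_(sg_lb)=0} and noting linearity of the coboundary maps.
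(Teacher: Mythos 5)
Your proposal is correct and matches the paper exactly: the paper states this theorem as an immediate consequence of \cref{dl^n_lb(C^n_lb)-sst-C^(n+1)_lb,dl^(n+1)_(sg_lb)-circ-dl^n_(sg_lb)=0}, with no further argument, which is precisely the assembly you describe.
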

	
	\begin{definition}
		Let $n\ge 0$. The cochain complex \cref{C^1_lb(X_K)->C^2_lb(X_K)->...} determines the following $K$-spaces of
		\begin{align*}
			\text{\textit{$(\sg,\lb)$-cocycles} }Z^n_{(\sg,\lb)}(X,K)&=\Ker\dl^n_{(\sg,\lb)},\ n\ge 0,\\
			\text{\textit{$(\sg,\lb)$-coboundaries} } B^n_{(\sg,\lb)}(X,K)&=\im\dl^{n-1}_{(\sg,\lb)},\ n>0,\\
			\text{\textit{$(\sg,\lb)$-cohomologies} }H^n_{(\sg,\lb)}(X,K)&=Z^n_{(\sg,\lb)}(X,K)/B^n_{(\sg,\lb)}(X,K),\ n>0,\\
			H^0_{(\sg,\lb)}(X,K)&=Z^0_{(\sg,\lb)}(X,K)
		\end{align*}
		\textit{of degree $n$  of $X$ with values in $K$}, respectively.
	\end{definition}

	\begin{remark}
		Observe that $H^n_{(\zeta,\id)}(X,K)$ is exactly the $n$-th cohomology group of the \textit{order complex} of $X$ (whose $n$-dimensional faces are the chains of length $n$ in $X$) with values in the additive group of $K$ (see, for example,~\cite[\S 5 and \S 42]{Munkres}).
	\end{remark}

	\subsection{Application to skew derivations of $I(X,K)$}
	
	\begin{theorem}\label{psiD/ipsiD-cong-H^1}
		Let $\psi=M_\sg\circ \hat\lb$. Then the space $\psiD(I(X,K))/\ipsiD(I(X,K))$ is isomorphic to $H^1_{(\sg,\lb)}(X,K)$.
	\end{theorem}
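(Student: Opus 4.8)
The plan is to exhibit a linear map $\Phi\colon\psiD(I(X,K))\to H^1_{(\sg,\lb)}(X,K)$, show it is surjective, identify its kernel as $\ipsiD(I(X,K))$, and invoke the first isomorphism theorem. The natural candidate for $\Phi$ comes from \cref{psiD=ipsiD+apsiD}: given $d\in\psiD(I(X,K))$, use \cref{d(e_x)=ad_f(e_x)} to find $f\in I(X,K)$ with $d(e_x)=\Dpsi_f(e_x)$ for all $x$, so that $d-\Dpsi_f=L_\tau$ for a unique $(\sg,\lb)$-additive $\tau$ (by \cref{d(e_x)=0iff} and \cref{L_tau(e_xy)}). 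I would then observe that the defining identities \cref{tau(x_y)=0-for-lb(x)-ne-y} and \cref{tau_xyz} for a $(\sg,\lb)$-additive element are \emph{exactly} the conditions $\tau\in C^1_\lb(X,K)$ and $\dl^1_{(\sg,\lb)}\tau=0$, i.e.\ the space of $(\sg,\lb)$-additive elements of $I(X,K)$ is canonically identified with $Z^1_{(\sg,\lb)}(X,K)$ (an element of $I(X,K)$ supported on pairs $x\le y$ is the same datum as a function on $X^2_\le$). Under this identification, the $(\sg,\lb)$-potential elements $\tau_\epsilon$ from \cref{tau_epsilon-defn} are precisely the coboundaries: $\tau_\epsilon=-\dl^0_{(\sg,\lb)}\epsilon$ where $\epsilon\in C^0_\lb(X,K)$ corresponds via \cref{eps(x)=0-for-lb(x)-nleq-x} to a $0$-$\lb$-cochain. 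So I would define $\Phi(d)=[\tau]\in H^1_{(\sg,\lb)}(X,K)$.

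The first thing to check is that $\Phi$ is \textbf{well defined}, i.e.\ independent of the choice of $f$. If $f'$ is another element with $d(e_x)=\Dpsi_{f'}(e_x)$ for all $x$, then $d-\Dpsi_{f'}=L_{\tau'}$ with $\tau'$ $(\sg,\lb)$-additive, and $L_\tau-L_{\tau'}=\Dpsi_{f'}-\Dpsi_f=\Dpsi_{f'-f}$ is an inner $\psi$-derivation which is also additive, hence lies in $\ipsiD(I(X,K))\cap\apsiD(I(X,K))=\ppsiD(I(X,K))$ by \cref{ppsiD=ipsiD-cap-apsiD}; thus $\tau-\tau'=L_{\tau}-L_{\tau'}$ corresponds to a $(\sg,\lb)$-potential element, i.e.\ a coboundary, so $[\tau]=[\tau']$. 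Linearity of $\Phi$ is then routine (the decompositions can be chosen additively). \textbf{Surjectivity} is immediate: every $(\sg,\lb)$-cocycle $\tau$ is a $(\sg,\lb)$-additive element, $L_\tau\in\psiD(I(X,K))$ by \cref{L_tau-in-vfD}, and $\Phi(L_\tau)=[\tau]$ since $L_\tau(e_x)=0$ lets us take $f=0$.

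Finally I would compute $\Ker\Phi$. If $\Phi(d)=0$, then in the decomposition $d=\Dpsi_f+L_\tau$ the class $[\tau]$ vanishes, so $\tau=\tau_\epsilon$ is $(\sg,\lb)$-potential, whence $L_\tau=L_{\tau_\epsilon}\in\ppsiD(I(X,K))\sst\ipsiD(I(X,K))$ by \cref{ppsiD=ipsiD-cap-apsiD}; therefore $d=\Dpsi_f+L_{\tau_\epsilon}\in\ipsiD(I(X,K))$. Conversely, if $d=\Dpsi_g\in\ipsiD(I(X,K))$, then taking $f=g$ in the construction gives $d-\Dpsi_f=0=L_0$, so the associated cocycle is $\tau=0$ and $\Phi(d)=0$; here one should note that the value of $\Phi$ does not depend on which inner representative is used, which is exactly the well-definedness already established. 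Hence $\Ker\Phi=\ipsiD(I(X,K))$ and $\psiD(I(X,K))/\ipsiD(I(X,K))\cong H^1_{(\sg,\lb)}(X,K)$. The main obstacle is the bookkeeping in the well-definedness step, i.e.\ making precise the dictionary ``($(\sg,\lb)$-additive element $=$ $1$-cocycle, $(\sg,\lb)$-potential element $=$ $1$-coboundary)'' and checking it is compatible with the non-uniqueness of the decomposition in \cref{psiD=ipsiD+apsiD}; once that dictionary is set up, everything else is a short diagram chase using \cref{ppsiD=ipsiD-cap-apsiD}.
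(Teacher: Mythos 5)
Your proof is correct and follows essentially the same route as the paper: both rest on the dictionary identifying $(\sg,\lb)$-additive (resp.\ $(\sg,\lb)$-potential) elements with $1$-cocycles (resp.\ $1$-coboundaries), on \cref{psiD=ipsiD+apsiD} for surjectivity, and on \cref{ppsiD=ipsiD-cap-apsiD} for injectivity/well-definedness. The only difference is cosmetic: the paper defines the map in the opposite direction, $f+B^1_{(\sg,\lb)}(X,K)\mapsto L_f+\ipsiD(I(X,K))$, which sidesteps your explicit check that the class of $\tau$ is independent of the chosen $f$ in the decomposition.
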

	\begin{proof}
		By \cref{funcao_potencial,additive_element}, $(\sg,\lb)$-additive (resp.~$(\sg,\lb)$-potential) elements of $I(X,K)$ are in a one-to-one correspondence with $(\sg,\lb)$-cocycles (resp.~$(\sg,\lb)$-coboundaries) of degree $1$ of $X$ with values in $K$. \cref{ppsiD=ipsiD-cap-apsiD,L_tau-in-vfD} show that the map
		\begin{align*}
			H^1_{(\sg,\lb)}(X,K) &\to \psiD(I(X,K))/\ipsiD(I(X,K)),\\
			f+B^1_{(\sg,\lb)}(X,K)&\mapsto L_f+\ipsiD(I(X,K)),
		\end{align*}
		is well-defined and injective, and \cref{psiD=ipsiD+apsiD} shows that it is also surjective.
	\end{proof}
	
	\begin{corollary}\label{vfD/ivfD-cong-H^1}
		Let $\vf=\xi_\bt\circ M_\sg\circ \hat\lb$. Then the space $\vfD(I(X,K))/\ivfD(I(X,K))$ is isomorphic to $H^1_{(\sg,\lb)}(X,K)$.
	\end{corollary}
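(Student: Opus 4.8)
The plan is to deduce the statement from \cref{psiD/ipsiD-cong-H^1} by the change-of-variable trick of \cref{vfD(A)=u.psiD(A)}. Set $\psi=M_\sg\circ \hat\lb$, so that $\vf=\xi_\bt\circ\psi$. First I would observe that, by \cref{vfD(A)=u.psiD(A)}, left multiplication by $\bt$ gives a $K$-linear bijection $\psiD(I(X,K))\to\vfD(I(X,K))$, $d\mapsto\bt\cdot d$, whose restriction to $\ipsiD(I(X,K))$ is a bijection onto $\ivfD(I(X,K))$ (this is the second assertion of \cref{vfD(A)=u.psiD(A)}; concretely, the preimage of $\Dvf_a$ is $\Dpsi_{\bt\m a}$). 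Consequently this map descends to a $K$-linear isomorphism
\[
\psiD(I(X,K))/\ipsiD(I(X,K))\ \xrightarrow{\ \sim\ }\ \vfD(I(X,K))/\ivfD(I(X,K)).
\]

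It then suffices to precompose with the isomorphism $H^1_{(\sg,\lb)}(X,K)\cong\psiD(I(X,K))/\ipsiD(I(X,K))$ provided by \cref{psiD/ipsiD-cong-H^1}, which is applicable since $\psi$ has exactly the form $M_\sg\circ\hat\lb$ required there. Tracing through the two isomorphisms, the resulting map carries the class of a $(\sg,\lb)$-cocycle $f$ of degree $1$ --- equivalently, of a $(\sg,\lb)$-additive $f\in I(X,K)$ in the sense of \cref{additive_element} --- to the class of $\bt\cdot L_f$ modulo $\ivfD(I(X,K))$.

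I do not expect any genuine obstacle: the mathematical content is already contained in \cref{vfD(A)=u.psiD(A)} and \cref{psiD/ipsiD-cong-H^1}. The only subtlety worth spelling out is that multiplication by $\bt$ must send $\ipsiD(I(X,K))$ \emph{onto} $\ivfD(I(X,K))$ (not merely into it), so that it induces a well-defined isomorphism of quotients; and this is precisely what the equality $\ivfD(I(X,K))=\bt\cdot\ipsiD(I(X,K))$ from \cref{vfD(A)=u.psiD(A)} guarantees.
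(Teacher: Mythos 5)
Your proposal is correct and follows essentially the same route as the paper: the paper's proof is exactly the chain $\vfD(I(X,K))/\ivfD(I(X,K))=(\bt\cdot\psiD(I(X,K)))/(\bt\cdot\ipsiD(I(X,K)))\cong\psiD(I(X,K))/\ipsiD(I(X,K))\cong H^1_{(\sg,\lb)}(X,K)$, citing \cref{vfD(A)=u.psiD(A)} and \cref{psiD/ipsiD-cong-H^1}. Your extra remarks about surjectivity of $d\mapsto\bt\cdot d$ onto $\ivfD(I(X,K))$ and the explicit description of the composite map are correct and merely make explicit what the paper leaves implicit.
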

	\begin{proof}
		By \cref{vfD(A)=u.psiD(A),psiD/ipsiD-cong-H^1} 
		\begin{align*}
			\vfD(I(X,K))/\ivfD(I(X,K))&=(\bt\cdot\psiD(I(X,K)))/(\bt\cdot\ipsiD(I(X,K)))\\
			&\cong\psiD(I(X,K))/\ipsiD(I(X,K))\\
			&\cong H^1_{(\sg,\lb)}(X,K).
		\end{align*}
	\end{proof}
	
	\begin{corollary}\label{vfD=ivfD-iff-H^1=0}
		Let $\vf=\xi_\bt\circ M_\sg\circ \hat\lb$. Then $\vfD(I(X,K))=\ivfD(I(X,K))$ if and only if $H^1_{(\sg,\lb)}(X,K)$ is trivial.
	\end{corollary}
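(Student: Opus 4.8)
The plan is to read this off directly from \cref{vfD/ivfD-cong-H^1}. First I would observe that, as noted in the Introduction and in \cref{sec-prelim}, every inner $\vf$-derivation is a $\vf$-derivation, so $\ivfD(I(X,K))\subseteq\vfD(I(X,K))$ and the quotient $\vfD(I(X,K))/\ivfD(I(X,K))$ is a well-defined $K$-space. The point is then the elementary fact that for a subspace $W\subseteq V$ of a $K$-space one has $V=W$ if and only if the quotient $V/W$ is the zero space.

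Next I would invoke \cref{vfD/ivfD-cong-H^1}, which gives an isomorphism of $K$-spaces
\begin{align*}
	\vfD(I(X,K))/\ivfD(I(X,K))\cong H^1_{(\sg,\lb)}(X,K).
\end{align*}
Since an isomorphism of $K$-spaces is a bijective linear map, it sends the zero vector to the zero vector and has trivial kernel; hence one of the two spaces above is the zero space precisely when the other one is. Combining this with the previous paragraph, $\vfD(I(X,K))=\ivfD(I(X,K))$ holds if and only if $H^1_{(\sg,\lb)}(X,K)$ is the zero space, i.e., if and only if $H^1_{(\sg,\lb)}(X,K)$ is trivial.

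There is no genuine obstacle here: the statement is a formal consequence of \cref{vfD/ivfD-cong-H^1}. The only point worth being explicit about is the meaning of ``trivial'', namely that $H^1_{(\sg,\lb)}(X,K)=Z^1_{(\sg,\lb)}(X,K)/B^1_{(\sg,\lb)}(X,K)$ reduces to the zero $K$-space, equivalently $Z^1_{(\sg,\lb)}(X,K)=B^1_{(\sg,\lb)}(X,K)$, which matches the conventions fixed in \cref{sec-cohom}. (If one preferred a self-contained argument, the same conclusion could instead be obtained by unwinding \cref{vfD(A)=u.psiD(A),psiD/ipsiD-cong-H^1} in place of \cref{vfD/ivfD-cong-H^1}, but citing the corollary is the cleanest route.)
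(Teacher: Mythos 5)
Your argument is correct and is exactly the one the paper intends: the corollary is stated without proof as an immediate consequence of \cref{vfD/ivfD-cong-H^1}, namely that the quotient $\vfD(I(X,K))/\ivfD(I(X,K))$ is the zero space precisely when the two spaces coincide, and the isomorphism transports this to the triviality of $H^1_{(\sg,\lb)}(X,K)$. Nothing is missing.
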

	
	\subsection{Some properties of the poset skew cohomology}
	
	\begin{definition}
		We say that two multiplicative elements $\sg,\sg'\in I(X,K)$ are \textit{equivalent}, if there exists a fractional element $\eta$ such that $\sg'=\sg*\eta$.
	\end{definition}

	\begin{remark}\label{sg-fractional-iff-sg-equiv-zeta}
		A multiplicative $\sg\in I(X,K)$ is fractional if and only if $\sg$ is equivalent to the zeta function $\zeta\in I(X,K)$.
	\end{remark}
	
	\begin{proposition}\label{H^n_(sg_lb)-cong-H^n_(sg'_lb)}
		If $\sg$ and $\sg'$ are equivalent, then $H^n_{(\sg,\lb)}(X,K)\cong H^n_{(\sg',\lb)}(X,K)$.
	\end{proposition}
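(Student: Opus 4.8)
The plan is to produce an explicit isomorphism of cochain complexes between $\bigl(C^\bullet_\lb(X,K),\dl^\bullet_{(\sg,\lb)}\bigr)$ and $\bigl(C^\bullet_\lb(X,K),\dl^\bullet_{(\sg',\lb)}\bigr)$. These two complexes have the \emph{same} underlying cochain spaces and differ only in their differentials; hence it suffices to construct, for every $n\ge 0$, a linear automorphism $\Phi^n$ of $C^n_\lb(X,K)$ satisfying the commutation identity $\Phi^{n+1}\circ\dl^n_{(\sg,\lb)}=\dl^n_{(\sg',\lb)}\circ\Phi^n$. Passing to kernels, images and quotients then yields $Z^n_{(\sg,\lb)}(X,K)\cong Z^n_{(\sg',\lb)}(X,K)$, $B^n_{(\sg,\lb)}(X,K)\cong B^n_{(\sg',\lb)}(X,K)$, and therefore $H^n_{(\sg,\lb)}(X,K)\cong H^n_{(\sg',\lb)}(X,K)$ for all $n$.

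By hypothesis $\sg'=\sg\ast\eta$ with $\eta$ fractional, so there is $\rho:X\to K^*$ with $\eta(x,y)=\rho(x)\rho(y)\m$, whence $\sg'(x,y)=\sg(x,y)\rho(x)\rho(y)\m$ for all $x\le y$. Define $\Phi^n:C^n(X,K)\to C^n(X,K)$ by $(\Phi^nf)(x_0,\dots,x_n)=\rho(\lb(x_0))f(x_0,\dots,x_n)$. This is a linear automorphism, with inverse $g\mapsto\rho(\lb(x_0))\m g$, and since it merely rescales values by a nonzero scalar it preserves the vanishing condition defining $C^n_\lb(X,K)$, so it restricts to an automorphism of $C^n_\lb(X,K)$.

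It then remains to check the commutation identity. Evaluating both sides on a cochain $f$ and a tuple $(x_0,\dots,x_{n+1})$ and using \cref{dl^nf(x_1...x_(n+1))}, observe that every ``face'' term $(-1)^if(x_0,\dots,\wh{x_i},\dots,x_{n+1})$ with $1\le i\le n+1$ still has $x_0$ as its first entry, so $\Phi^n$ and $\Phi^{n+1}$ multiply these terms by the same scalar $\rho(\lb(x_0))$ and they agree on both sides. The only possible discrepancy is between the leading terms, namely $\rho(\lb(x_0))\sg(\lb(x_0),\lb(x_1))f(x_1,\dots,x_{n+1})$ on one side and $\sg'(\lb(x_0),\lb(x_1))\rho(\lb(x_1))f(x_1,\dots,x_{n+1})$ on the other; these coincide because $\sg'(\lb(x_0),\lb(x_1))\rho(\lb(x_1))=\sg(\lb(x_0),\lb(x_1))\rho(\lb(x_0))\rho(\lb(x_1))\m\rho(\lb(x_1))=\sg(\lb(x_0),\lb(x_1))\rho(\lb(x_0))$. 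This gives $\Phi^{n+1}\circ\dl^n_{(\sg,\lb)}=\dl^n_{(\sg',\lb)}\circ\Phi^n$ and completes the argument.

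There is no real obstacle here; the only point requiring care is the bookkeeping in the final step — confirming that the face maps of the two complexes genuinely match after the twist and that the fractional factor $\rho\circ\lb$ telescopes correctly against the leading coface term, which is exactly the identity $\sg'=\sg\ast\eta$ rewritten at the arguments $(\lb(x_0),\lb(x_1))$. Alternatively, the degree-$1$ case could be obtained more conceptually from \cref{vfD(A)=u.psiD(A),psiD/ipsiD-cong-H^1} by noting that $M_\eta=\xi_u$ for the diagonal element $u=\sum_{x\in X}\rho(x)e_x$ and that $M_\sg\circ\xi_u=\xi_u\circ M_\sg$ for such diagonal $u$, so that $\psi'=\xi_u\circ\psi$; but since the proposition asserts the isomorphism in every degree, the explicit chain-level map $\Phi^\bullet$ above is the efficient route.
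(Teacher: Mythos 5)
Your proposal is correct and is essentially the paper's own proof: both construct the chain isomorphism $f\mapsto\rho(\lb(x_0))f(x_0,\dots,x_n)$ (the paper writes $\gm$ for your $\rho$) and verify the same commutation identity with the differentials, from which the isomorphism on cohomology follows. The extra remarks (restriction to $C^n_\lb$, the alternative degree-$1$ argument) are fine but not needed.
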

	\begin{proof}
		Let $\eta$ be a fractional element such that $\sg'=\sg*\eta$. There exists $\gm:X\to K^*$ such that $\eta(x,y)=\gm(x)\gm(y)\m$ for all $x\le y$ in $X$. For all $n\ge 0$, the map sending $f\in C^n_\lb(X,K)$ to 
		\begin{align*}
			f'(x_0,\dots,x_n)=\gm(\lb(x_0))f(x_0,\dots,x_n)
		\end{align*}
		is clearly an isomorphism of $K$-spaces $C^n_\lb(X,K)\to C^n_\lb(X,K)$. Moreover,
		\begin{align*}
			(\dl^n_{(\sg',\lb)}f')(x_0,\dots,x_{n+1})&=\sg'(\lb(x_0),\lb(x_1))f'(x_1,\dots,x_{n+1})\notag\\
			&\quad+\sum_{i=1}^{n+1}(-1)^i f'(x_0,\dots,\wh{x_i},\dots,x_{n+1})\\
			&=\sg(\lb(x_0),\lb(x_1))\gm(\lb(x_0))\gm(\lb(x_1))\m \gm(\lb(x_1)) f(x_1,\dots,x_{n+1})\notag\\
			&\quad+\sum_{i=1}^{n+1}(-1)^i \gm(\lb(x_0))f(x_0,\dots,\wh{x_i},\dots,x_{n+1})\\
			&=\gm(\lb(x_0))(\dl^n_{(\sg,\lb)}f)(x_0,\dots,x_{n+1})=(\dl^n_{(\sg,\lb)}f)'(x_0,\dots,x_{n+1}).
		\end{align*}
		Thus, $f\mapsto f'$ is an isomorphism of cochain complexes, so it induces an isomorphism of the corresponding cohomology spaces.
	\end{proof}
	
	If $X$ has an all-comparable element, then by \cite[Proposition~7.3.4]{SpDo} any multiplicative $\sg\in I(X,K)$ is fractional. 
	So, in view of \cref{sg-fractional-iff-sg-equiv-zeta} we have the following corollary of \cref{H^n_(sg_lb)-cong-H^n_(sg'_lb)}. 
	
	\begin{corollary}\label{H^1_(sg_lb)-cong-H^1_(zeta_lb)}
		Assume that $X$ has an all-comparable element. Then for any $\lb\in\Aut(X)$ and multiplicative $\sg\in I(X,K)$ we have $H^1_{(\sg,\lb)}(X,K)\cong H^1_{(\zeta,\lb)}(X,K)$.
	\end{corollary}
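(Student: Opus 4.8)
The plan is to simply chain together the three facts that are already available, since the statement is a direct corollary of \cref{H^n_(sg_lb)-cong-H^n_(sg'_lb)}. The key observation is that the hypothesis on $X$ trivializes the classification of multiplicative elements up to equivalence. First I would invoke \cite[Proposition~7.3.4]{SpDo}: since $X$ has an all-comparable element, every multiplicative $\sg\in I(X,K)$ is fractional. Next, by \cref{sg-fractional-iff-sg-equiv-zeta}, a multiplicative element is fractional if and only if it is equivalent to the zeta function $\zeta$; hence $\sg$ and $\zeta$ are equivalent. Finally, I would apply \cref{H^n_(sg_lb)-cong-H^n_(sg'_lb)} with $\sg'=\zeta$ and $n=1$, which yields $H^1_{(\sg,\lb)}(X,K)\cong H^1_{(\zeta,\lb)}(X,K)$.

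There is essentially no obstacle here: the only thing to verify is that the hypotheses of \cref{H^n_(sg_lb)-cong-H^n_(sg'_lb)} are met, and they are, because $\lb\in\Aut(X)$ is arbitrary and the equivalence $\sg*\eta=\zeta$ for some fractional $\eta$ (equivalently, $\sg$ fractional) is exactly the input that proposition requires. I note in passing that the same argument gives $H^n_{(\sg,\lb)}(X,K)\cong H^n_{(\zeta,\lb)}(X,K)$ for every $n\ge 0$; the statement is restricted to $n=1$ only because that is the degree relevant to the description of $\vf$-derivations via \cref{vfD/ivfD-cong-H^1}.
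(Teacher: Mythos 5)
Your proof is correct and follows exactly the route the paper takes: \cite[Proposition~7.3.4]{SpDo} gives that $\sg$ is fractional, \cref{sg-fractional-iff-sg-equiv-zeta} turns this into equivalence with $\zeta$, and \cref{H^n_(sg_lb)-cong-H^n_(sg'_lb)} (with $n=1$) finishes the argument. Your side remark that the isomorphism holds for all $n\ge 0$ is also accurate and implicit in the paper's setup.
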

	
	The following example shows that the converse of \cref{H^n_(sg_lb)-cong-H^n_(sg'_lb)} does not hold.
	\begin{example}\label{2_crown_1}
		Let $X=\{1,2,3,4\}$ with the following Hasse diagram ($2$-crown poset).
		\begin{center}
			\begin{tikzpicture}
				\draw [fill=black] (0,-0.3) node{$1$}   (0,2.3) node{$3$}  (2,-0.3) node{$2$}  (2,2.3) node{$4$}
				(1.02,-0.5) (0,0) circle(0.05)--(0,1)--(0,2)circle(0.05)--(1,1)--(2,0)circle(0.05)--(2,1)--(2,2)circle(0.05) --(1,1)--(0,0);
			\end{tikzpicture}
		\end{center} 
		Take an arbitrary multiplicative $\sg\in I(X,K)$ and $\lb\in\Aut(X)$ such that $\lb(1)=2$, $\lb(2)=1$, $\lb(3)=4$ and $\lb(4)=3$. Then $\lb(x)\nleq x$ for all $x\in X$, so $C^0_\lb(X,K)=\{0\}$, and consequently $B^1_{(\sg,\lb)}(X,K)=\{0\}$. Hence, $H^1_{(\sg,\lb)}(X,K)\cong Z^1_{(\sg,\lb)}(X,K)$, the latter being the same as $Z^1_{(\sg,\id)}(X,K)\cong K^4$, because $x<y\iff\lb(x)<y$ for all $x,y\in X$. Thus, $H^1_{(\sg,\lb)}(X,K)$ does not depend on $\sg$, in particular, $H^1_{(\sg,\lb)}(X,K)\cong H^1_{(\zeta,\lb)}(X,K)$, but there exist $\sg$ that are not equivalent to $\zeta$ (see, for instance, \cite[Theorem 7.3.6 and Proposition 7.3.3]{SpDo} and \cite[Example]{Stanley70}).
	\end{example}
	
	However, the condition that $\sg$ and $\sg'$ are equivalent in \cref{H^n_(sg_lb)-cong-H^n_(sg'_lb)} cannot be dropped, as the next example shows.
	\begin{example}\label{H^1_(sg_id)(2-crown)}
		Let $X=\{1,2,3,4\}$ as in \cref{2_crown_1}. Then
		\begin{align*}
			H^1_{(\sg,\id)}(X,K)\cong
			\begin{cases}
				K, & \mbox{if }\sg\mbox{ is fractional},\\
				\{0\}, & \mbox{otherwise}.
			\end{cases}
		\end{align*}
		
		Indeed, $C^1_\id(X,K)=C^1(X,K)$ and
		\begin{align}\label{Z^1_(sg_lb)(X_K)={f|f(x_x)=0}}
			Z^1_{(\sg,\id)}(X,K)=\{f\in C^1(X,K) : f(x,x)=0 \text{ for all } x\in X\}\cong K^4.    
		\end{align}
		Given $f\in Z^1_{(\sg,\id)}(X,K)$, there exists $g\in C^0_\id(X,K)$ with $f=\dl^0_{(\sg,\id)}g$ if and only if the linear system
		\begin{align}
			f(1,3)&=\sg(1,3)g(3)-g(1),\ f(1,4)=\sg(1,4)g(4)-g(1),\notag\\
			f(2,3)&=\sg(2,3)g(3)-g(2),\ f(2,4)=\sg(2,4)g(4)-g(2)\label{system-f=dl^0g}
		\end{align}
		admits a solution in $g(1), g(2), g(3), g(4)$. The determinant of \cref{system-f=dl^0g} equals 
		\begin{align*}
			\Dl=\sg(1,3)\sg(2,4)-\sg(1,4)\sg(2,3).
		\end{align*}
		Observe that
		\begin{align*}
			\Dl=0\iff\sg(1,3)\sg(2,3)\m\sg(2,4)\sg(1,4)\m=1,
		\end{align*}
		the latter being equivalent to fractionality of $\sg$ (see \cite[Theorem~5]{BFS15}). Thus, if $\sg$ is not fractional, then $Z^1_{(\sg,\id)}(X,K)=B^1_{(\sg,\id)}(X,K)$, so $H^1_{(\sg,\id)}(X,K)=\{0\}$. Otherwise, 
		\begin{align*}
			&\sg(2,4)(\sg(1,3)g(3)-g(1))-\sg(2,4)(\sg(1,4)g(4)-g(1))\\
			&\quad-\sg(1,4)(\sg(2,3)g(3)-g(2))+\sg(1,4)(\sg(2,4)g(4)-g(2))=0,
		\end{align*}
		so if \cref{system-f=dl^0g} has a solution, then
		\begin{align*}
			\sg(2,4)f(1,3)-\sg(2,4)f(1,4)-\sg(1,4)f(2,3)+\sg(1,4)f(2,4)=0.
		\end{align*}
		The converse is also true, because any subsystem of \cref{system-f=dl^0g} consisting of three equations has a solution.
		Hence, $B^1_{(\sg,\id)}(X,K)\cong K^3$, and $H^1_{(\sg,\id)}(X,K)\cong K$ in this case.
	\end{example}
	
	\begin{proposition}\label{all-comp=>H^1_(sg_lb)=0}
		Assume that $X$ has an all-comparable element $x_0$. If $\lb\in\Aut(X)$ is such that 
		\begin{align}\label{x_0_lb(x_0)<=x_or_x_lb(x)<=x_0-impl-lb(x)<=x}
			(x_0,\lb(x_0)\le x\text{ or }x,\lb(x)\le x_0)\ \impl\ \lb(x)\le x,
		\end{align}
		then $H^1_{(\sg,\lb)}(X,K)=\{0\}$ for any multiplicative $\sg\in I(X,K)$.
	\end{proposition}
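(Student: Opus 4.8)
The plan is to show directly that $Z^1_{(\sg,\lb)}(X,K) = B^1_{(\sg,\lb)}(X,K)$, i.e.\ every $(\sg,\lb)$-cocycle of degree $1$ is a $(\sg,\lb)$-coboundary. By the one-to-one correspondence noted in the proof of \cref{psiD/ipsiD-cong-H^1}, this amounts to showing that every $(\sg,\lb)$-additive $\tau\in I(X,K)$ is $(\sg,\lb)$-potential, so equivalently I could work with the element $\tau$ directly and exhibit $\epsilon:X\to K$ with $\epsilon(x)=0$ whenever $\lb(x)\nleq x$ and $\tau=\tau_\epsilon$. I'll phrase it in terms of cochains. So let $f\in Z^1_{(\sg,\lb)}(X,K)$; I must produce $g\in C^0_\lb(X,K)$ (that is, $g:X\to K$ with $g(x)=0$ when $\lb(x)\nleq x$) such that $f=\dl^0_{(\sg,\lb)}g$, which by \cref{dl^nf(x_1...x_(n+1))} means
\begin{align*}
f(x,y)=\sg(\lb(x),\lb(y))g(y)-g(x)\quad\text{for all }x\le y.
\end{align*}

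The natural candidate is to define $g$ using the all-comparable element $x_0$. First I would record the key simplification coming from hypothesis \cref{x_0_lb(x_0)<=x_or_x_lb(x)<=x_0-impl-lb(x)<=x}: for every $x\in X$, since $x_0$ is all-comparable we have $x_0\le x$ or $x\le x_0$; in either of the two resulting situations where the cocycle relation will involve $\lb(x)$, the hypothesis forces $\lb(x)\le x$. Concretely, for $x$ with $x_0\le x$, the pair $(x_0,x)$ and the value $f(x_0,x)$ are governed by $\lb(x_0)\le\lb(x)$ and I will want $\lb(x_0)\le x$; for $x$ with $x\le x_0$, symmetrically. Define
\begin{align*}
g(x)=
\begin{cases}
\sg(\lb(x),\lb(x_0))\,f(x_0,x) & \text{if }x_0\le x,\\[2pt]
-f(x,x_0)+(\text{correction}) & \text{if }x\le x_0,
\end{cases}
\end{align*}
where the precise formula for $x\le x_0$ is forced by requiring consistency at $x=x_0$ (both branches must give $g(x_0)$, and $f(x_0,x_0)=0$ since $f$ is a cocycle, giving $g(x_0)=0$, and then the $x\le x_0$ branch should read $g(x) = -\sg(\lb(x),\lb(x_0))f(x,x_0)\cdot\sg(\lb(x_0),\lb(x))\m$-type expression — I'll pin down the exact multiplicative factor by plugging the cocycle identity $\dl^1_{(\sg,\lb)}f=0$ applied to triples containing $x_0$). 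The point is that the cocycle condition on triples $(x_0,y,z)$ (when $x_0$ is below) and on triples $(y,z,x_0)$ or $(x,x_0,z)$ (when $x_0$ is above, or in between) pins down all the values $f(x,y)$ in terms of the values $f(x_0,\cdot)$ and $f(\cdot,x_0)$, which is exactly what makes $g$ work globally.

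Then the verification splits into cases according to the positions of $x,y$ relative to $x_0$ (namely $x_0\le x\le y$; $x\le x_0\le y$; $x\le y\le x_0$), in each case using the $1$-cocycle identity $\dl^1_{(\sg,\lb)}f = 0$ on the triple formed by $x,y,x_0$ in the appropriate order, together with the multiplicativity relation \cref{sg(x_z)=sg(x_y)sg(y_z)} for $\sg$, to check that $\sg(\lb(x),\lb(y))g(y)-g(x)$ collapses to $f(x,y)$. I also need to check $g\in C^0_\lb(X,K)$: if $\lb(x)\nleq x$ then by the hypothesis \cref{x_0_lb(x_0)<=x_or_x_lb(x)<=x_0-impl-lb(x)<=x} we cannot have ($x_0\le x$ and $\lb(x_0)\le x$) nor ($x\le x_0$ and $\lb(x)\le x_0$); combined with the fact that $f\in C^1_\lb$ kills $f(x_0,x)$ when $\lb(x_0)\nleq x$ and $f(x,x_0)$ when $\lb(x)\nleq x_0$, this forces $g(x)=0$. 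The main obstacle I anticipate is bookkeeping: getting the multiplicative $\sg$-factors in the definition of $g$ exactly right so that all three gluing cases close, and correctly handling the degenerate equalities $x=x_0$, $y=x_0$, $x=y$; once the right formula for $g$ is in hand, each case is a short computation using a single instance of the cocycle identity and one application of \cref{sg(x_z)=sg(x_y)sg(y_z)}.
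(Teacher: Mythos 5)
Your overall strategy is sound and, at its core, coincides with the paper's: define $g$ from the values $f(x_0,\cdot)$ and $-f(\cdot,x_0)$, use hypothesis \cref{x_0_lb(x_0)<=x_or_x_lb(x)<=x_0-impl-lb(x)<=x} together with $f\in C^1_\lb(X,K)$ to check $g\in C^0_\lb(X,K)$, and verify $f=\dl^0_{(\sg,\lb)}g$ in the three cases $x_0\le x\le y$, $x\le x_0\le y$, $x\le y\le x_0$ via the cocycle identity on the triple containing $x_0$. The genuine difference is that the paper first disposes of $\sg$ entirely: since $X$ has an all-comparable element, every multiplicative $\sg$ is fractional (\cite[Proposition~7.3.4]{SpDo}), hence equivalent to $\zeta$, so by \cref{H^1_(sg_lb)-cong-H^1_(zeta_lb)} one may assume $\sg=\zeta$ and take plainly $g(x)=f(x_0,x)$ for $x_0\le x$ and $g(x)=-f(x,x_0)$ for $x<x_0$, with no multiplicative factors at all. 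You instead carry a general $\sg$ through the computation; this works, but be aware that your tentative formula $g(x)=\sg(\lb(x),\lb(x_0))f(x_0,x)$ for $x_0\le x$ is wrong as written, since $\lb(x_0)\le\lb(x)$ and so $\sg(\lb(x),\lb(x_0))$ is not a meaningful (nonzero) value; the factor that makes all three cases close is $\sg(\lb(x_0),\lb(x))\m$ on the branch $x_0\le x$, while on the branch $x\le x_0$ no correction is needed, i.e.\ $g(x)=-f(x,x_0)$ (one checks, e.g.\ in the case $x\le x_0\le y$, that $\sg(\lb(x),\lb(y))\sg(\lb(x_0),\lb(y))\m=\sg(\lb(x),\lb(x_0))$ by \cref{sg(x_z)=sg(x_y)sg(y_z)}, which is exactly the coefficient produced by the cocycle identity on $(x,x_0,y)$). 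So the plan is correct once this factor is pinned down, but the paper's preliminary reduction to $\sg=\zeta$ buys a cleaner argument at the price of invoking the fractionality result, whereas your direct route is self-contained but requires the bookkeeping you anticipated.
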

	\begin{proof}
		In view of \cref{H^1_(sg_lb)-cong-H^1_(zeta_lb)} it suffices to consider the case $\sg=\zeta$. Let $f\in Z^1_{(\zeta,\lb)}(X,K)$ and define $g:X\to K$ by
		\begin{align}\label{g-in-terms-of-f}
			g(x)=
			\begin{cases}
				f(x_0,x),& \mbox{if } x_0\le x,\\
				-f(x,x_0), & \mbox{if } x<x_0.
			\end{cases}
		\end{align}
		Assume that $\lb(x)\nleq x$. If $x_0\le x$, then $\lb(x_0)\nleq x$, so $f(x_0,x)=0$. Otherwise $x<x_0$, whence $\lb(x)\nleq x_0$, so $f(x,x_0)=0$. In both cases, $g(x)=0$. Thus, $g\in C^0_\lb(X,K)$.
		
		We now prove that $\dl^0_{(\zeta,\lb)}g=f$, i.e., for arbitrary $x\le y$,
		\begin{align*}
			g(y)-g(x)=f(x,y).
		\end{align*}
		Since $x_0$ is comparable to $x$ and $y$, there are $3$ cases to consider.
		
		\textit{Case 1.} $x_0\le x\le y$. Then 
		\begin{align}\label{f(x_0_y)=f(x_0_x)+sg(lb(x_0)_lb(x))f(x_y)}
			f(x_0,y)=f(x_0,x)+f(x,y).
		\end{align}
		By \cref{g-in-terms-of-f,f(x_0_y)=f(x_0_x)+sg(lb(x_0)_lb(x))f(x_y)} we have
		\begin{align*}
			g(y)-g(x)=f(x_0,y)-f(x_0,x)=f(x,y).
		\end{align*}
		
		\textit{Case 2.} $x< x_0\le y$. Then 
		\begin{align}\label{f(x_y)=f(x_x_0)+sg(lb(x)_lb(x_0))f(x_0_y)}
			f(x,y)&=f(x,x_0)+f(x_0,y).
		\end{align}
		By \cref{g-in-terms-of-f,f(x_y)=f(x_x_0)+sg(lb(x)_lb(x_0))f(x_0_y)} we have
		\begin{align*}
			g(y)-g(x)=f(x_0,y)+f(x,x_0)=f(x,y).
		\end{align*}
		
		\textit{Case 3.} $x\le y< x_0$. Then 
		\begin{align}\label{f(x_x_0)=f(x_y)+sg(lb(x)_lb(y))f(y_x_0)}
			f(x,x_0)=f(x,y)+f(y,x_0).
		\end{align}
		By \cref{g-in-terms-of-f,f(x_x_0)=f(x_y)+sg(lb(x)_lb(y))f(y_x_0)} we have
		\begin{align*}
			g(y)-g(x)=-f(y,x_0)+f(x,x_0)=f(x,y).
		\end{align*}
	\end{proof}
	
	As a consequence of \cref{all-comp=>H^1_(sg_lb)=0,vfD=ivfD-iff-H^1=0}, we obtain an analogue of \cite[Theorem 7.1.9]{SpDo}.
	
	\begin{corollary}
		Assume that $X$ has an all-comparable element $x_0$.  Let $\vf=\xi_\bt\circ M_\sg\circ \hat\lb$, where $\lb$ satisfies \cref{x_0_lb(x_0)<=x_or_x_lb(x)<=x_0-impl-lb(x)<=x}. Then $\vfD(I(X,K))=\ivfD(I(X,K))$.
	\end{corollary}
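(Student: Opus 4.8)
The plan is to obtain this statement by directly combining the two results invoked just above it, since all the substantive work has already been carried out. First I would observe that the hypotheses of the corollary are precisely those of \cref{all-comp=>H^1_(sg_lb)=0}: the poset $X$ has an all-comparable element $x_0$, and the automorphism $\lb$ occurring in the decomposition $\vf=\xi_\bt\circ M_\sg\circ\hat\lb$ satisfies condition \cref{x_0_lb(x_0)<=x_or_x_lb(x)<=x_0-impl-lb(x)<=x}. Applying \cref{all-comp=>H^1_(sg_lb)=0} therefore yields $H^1_{(\sg,\lb)}(X,K)=\{0\}$ for the multiplicative element $\sg$ appearing in the decomposition of $\vf$ (in fact for every multiplicative element of $I(X,K)$).

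Then I would feed this vanishing into \cref{vfD=ivfD-iff-H^1=0}, which asserts that, for $\vf=\xi_\bt\circ M_\sg\circ\hat\lb$, the equality $\vfD(I(X,K))=\ivfD(I(X,K))$ holds if and only if $H^1_{(\sg,\lb)}(X,K)$ is trivial. Since we have just established triviality of the latter, the desired equality $\vfD(I(X,K))=\ivfD(I(X,K))$ follows at once.

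There is no genuine obstacle in this argument: the only point that needs checking is that the condition imposed on $\lb$ in the statement is literally the hypothesis feeding \cref{all-comp=>H^1_(sg_lb)=0}, which it is, so the two citations chain together with nothing left to verify. It is worth remarking that taking $\vf=\id$ (so that $\lb=\id$, which trivially satisfies \cref{x_0_lb(x_0)<=x_or_x_lb(x)<=x_0-impl-lb(x)<=x}, and $\sg=\zeta$) recovers the classical fact that every derivation of $I(X,K)$ is inner when $X$ has an all-comparable element, i.e.\ \cite[Theorem~7.1.9]{SpDo}.
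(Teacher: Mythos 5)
Your proposal is correct and coincides with the paper's own (implicit) argument: the paper states this corollary precisely as a consequence of \cref{all-comp=>H^1_(sg_lb)=0} and \cref{vfD=ivfD-iff-H^1=0}, chained together exactly as you do.
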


	\begin{remark}\label{lb(x)=x-for-l(X)<infty}
		Let $l(X)<\infty$. Then for all $\lb\in\Aut(X)$ and $x\in X$
		\begin{align*}
			\lb(x)\le x\iff x\le\lb(x)\iff\lb(x)=x.
		\end{align*}
		In particular, if $x_0\in X$ is all-comparable, then $\lb(x_0)=x_0$.
		
		For, if $\lb(x)<x$ (resp.~$\lb(x)>x$), then $\{\lb^n(x)\}_{n=0}^\infty$ is an infinite descending (resp.~ascending) chain in $X$ contradicting $l(X)<\infty$.
	\end{remark}
	
	\begin{remark}\label{all-comp-l(X)<infty}
		Let $l(X)<\infty$ and assume that $X$ has an all-comparable element. Then \cref{x_0_lb(x_0)<=x_or_x_lb(x)<=x_0-impl-lb(x)<=x} is equivalent to $\lb=\id$.
		
		Indeed, since $\lb(x_0)=x_0$ by \cref{lb(x)=x-for-l(X)<infty}, we have $x_0,\lb(x_0)\le x\iff x_0\le x$, and similarly $x,\lb(x)\le x_0\iff x\le x_0$. Hence, the premise of \cref{x_0_lb(x_0)<=x_or_x_lb(x)<=x_0-impl-lb(x)<=x} is always true, so $\lb(x)\le x$ for all $x\in X$. By \cref{lb(x)=x-for-l(X)<infty} we conclude that $\lb=\id$.
	\end{remark}
	
	The following example shows that the condition \cref{x_0_lb(x_0)<=x_or_x_lb(x)<=x_0-impl-lb(x)<=x} in \cref{all-comp=>H^1_(sg_lb)=0} cannot be dropped.
	\begin{example}\label{V-poset}
		Let $X=\{1,2,3\}$ with the following Hasse diagram.
		\begin{center}
			\begin{tikzpicture}
				\draw  (0,0)-- (-1,1);
				\draw  (0,0)-- (1,1);
				\draw [fill=black] (-1,1) circle (0.05);
				\draw  (-1.2,1.2) node {$2$};
				\draw [fill=black] (1,1) circle (0.05);
				\draw  (1.2,1.2) node {$3$};
				\draw [fill=black] (0,0) circle (0.05);
				\draw  (0,-0.3) node {$1$};
			\end{tikzpicture}
		\end{center}
		Take $\lb\in\Aut(X)$ such that $\lb(1)=1$, $\lb(2)=3$ and $\lb(3)=2$. Then $H^1_{(\sg,\lb)}(X,K)\cong K$ for all multiplicative $\sg\in I(X,K)$. 
		
		Indeed, since $l(X)=1$, then  
		\begin{align}\label{Z^1_(sg_lb)(X_K)={f|f(1_1)=f(2_2)=f(3_3)=0}}
			Z^1_{(\sg,\lb)}(X,K)=\{f\in C^1_\lb(X,K) : f(1,1)=f(2,2)=f(3,3)=0\}.    
		\end{align}
		Moreover, $\lb(1)<2$, $\lb(1)<3$, $\lb(1)=1$, $\lb(2)\nleq 2$ and $\lb(3)\nleq 3$, so 
		\begin{align*}
			C^1_\lb(X,K)=\{f\in C^1(X,K) : f(2,2)=f(3,3)=0\}.    
		\end{align*}
		It follows that in the right-hand side of \cref{Z^1_(sg_lb)(X_K)={f|f(1_1)=f(2_2)=f(3_3)=0}} we can replace $C^1_\lb(X,K)$ by $C^1(X,K)$, so $Z^1_{(\sg,\lb)}(X,K)\cong K^2$.
		
		Let $f\in Z^1_{(\sg,\lb)}(X,K)$. Assume that $f=\dl^0_{(\sg,\lb)}g$ for some $g\in C^0_\lb(X,K)$. We have $\lb(x)\le x$ if and only if $x=1$, so $g(2)=g(3)=0$. Hence,
		\begin{align*}
			f(1,2)=\sg(\lb(1),\lb(2))g(2)-g(1)=-g(1)=\sg(\lb(1),\lb(3))g(3)-g(1)=f(1,3).
		\end{align*}
		Conversely, if $f(1,2)=f(1,3)$, then $f=\dl^0_{(\sg,\lb)}g$, where $g(1)=-f(1,2)$ and $g(2)=g(3)=0$. Thus, $B^1_{(\sg,\lb)}(X,K)\cong K$ and $H^1_{(\sg,\lb)}(X,K)\cong K^2/K\cong K$. 
	\end{example}
	
	\begin{remark}\label{H^1_(zeta_lb)=K-and-H^1_(zeta_id)=0}
		In the conditions of \cref{V-poset} we have $H^1_{(\zeta,\lb)}(X,K)\cong K$.\footnote{Notice that it also follows from \cref{H^1_(sg_lb)-cong-H^1_(zeta_lb)} that $H^1_{(\sg,\lb)}(X,K)\cong H^1_{(\zeta,\lb)}(X,K)$ for any multiplicative $\sg$.} However, $H^1_{(\zeta,\id)}(X,K)=\{0\}$ in view of \cref{all-comp=>H^1_(sg_lb)=0,all-comp-l(X)<infty} (see also \cite[Proposition~7.1.8]{SpDo}).
	\end{remark}
	
	We proceed with an example showing that $\lb\ne\id$ can result in $H^1_{(\sg,\lb)}(X,K)=\{0\}$ in a case where $H^1_{(\sg,\id)}(X,K)\ne\{0\}$ (a situation in certain sense opposite to that of \cref{H^1_(zeta_lb)=K-and-H^1_(zeta_id)=0}).
	
	\begin{example}\label{4-crown-poset}
		Let $X=\{1,2,3,4,5,6,7,8\}$ with the following Hasse diagram ($4$-crown poset).
		\begin{center}
			\begin{tikzpicture}[line cap=round,line join=round,>=triangle 45,x=1cm,y=1cm]
				\draw  (0,0)-- (0,2);
				\draw  (0,2)-- (2,0);
				\draw  (2,0)-- (2,2);
				\draw  (2,2)-- (4,0);
				\draw  (4,0)-- (4,2);
				\draw  (4,2)-- (6,0);
				\draw  (6,0)-- (6,2);
				\draw  (0,0)-- (6,2);
				\draw [fill=black] (0,0) circle (0.05);
				\draw  (0,-0.3) node {$1$};
				\draw [fill=black] (0,2) circle (0.05);
				\draw  (0,2.3) node {$5$};
				\draw [fill=black] (2,0) circle (0.05);
				\draw  (2,-0.3) node {$2$};
				\draw [fill=black] (2,2) circle (0.05);
				\draw  (2,2.3) node {$6$};
				\draw [fill=black] (4,0) circle (0.05);
				\draw  (4,-0.3) node {$3$};
				\draw [fill=black] (4,2) circle (0.05);
				\draw  (4,2.3) node {$7$};
				\draw [fill=black] (6,0) circle (0.05);
				\draw  (6,-0.3) node {$4$};
				\draw [fill=black] (6,2) circle (0.05);
				\draw  (6,2.3) node {$8$};
			\end{tikzpicture}
		\end{center}
		Consider $\lb\in\Aut(X)$ such that $\lb(1)=3$, $\lb(2)=4$, $\lb(3)=1$, $\lb(4)=2$, $\lb(5)=7$, $\lb(6)=8$, $\lb(7)=5$ and $\lb(8)=6$. Then $Z^1_{(\sg,\lb)}(X,K)=\{0\}$ for any multiplicative $\sg\in I(X,K)$, because $\{(x,y)\in X^2_\le : \lb(x)\le y\}=\emptyset$. In particular, $H^1_{(\zeta,\lb)}(X,K)=\{0\}$, but $H^1_{(\zeta,\id)}(X,K)\ne\{0\}$ by \cite[Theorem 3.11]{FP1}.\footnote{In fact, $H^1_{(\zeta,\id)}(X,K)\cong K$, because $\tau\in Z^1_{(\zeta,\id)}(X,K)$ belongs to $B^1_{(\zeta,\id)}(X,K)$ if and only if $\tau(1,5)-\tau(2,5)+\tau(2,6)-\tau(3,6)+\tau(3,7)-\tau(4,7)+\tau(4,8)-\tau(1,8)=0$.} 
	\end{example}
	
	We can slightly modify \cref{4-crown-poset} to show that condition \cref{x_0_lb(x_0)<=x_or_x_lb(x)<=x_0-impl-lb(x)<=x} from \cref{all-comp=>H^1_(sg_lb)=0} is not necessary for $H^1_{(\sg,\lb)}(X,K)=\{0\}$.
	\begin{example}\label{4-crown-with-0}
		Let $X$ be the poset from \cref{4-crown-poset} with adjoint minimum element $0$. The map $\lb$ from \cref{4-crown-poset} uniquely extends to an automorphism of $X$ by $\lb(0)=0$. Let $f\in Z^1_{(\sg,\lb)}(X,K)$. Then $f(x,y)=0$ for all $x,y\in\{1,\dots, 8\}$, $x\le y$, as in \cref{4-crown-poset}. Hence,
		\begin{align*}
			f(0,1)=f(0,1)+\sg(0,3)f(1,5)=f(0,5)=f(0,2)+\sg(0,4)f(2,5)=f(0,2).
		\end{align*}
		Similarly, $f(0,2)=f(0,6)=f(0,3)$, $f(0,3)=f(0,7)=f(0,4)$ and $f(0,4)=f(0,8)=f(0,1)$. Define $g(0)=-f(0,1)$ and $g(x)=0$ for all $x\ne 0$. Then clearly $g\in C^0_\lb(X,K)$ and $f=\dl^0_{(\sg,\lb)}g$. Thus, $H^1_{(\sg,\lb)}(X,K)=\{0\}$.
	\end{example}

	

	\section*{Acknowledgements}
	
	The second author was partially supported by CMUP, member of LASI, which is financed by national funds through FCT --- Fundação para a Ciência e a Tecnologia, I.P., under the project with reference UIDB/00144/2020. The authors are grateful to the referee for a very careful reading of the paper and the suggested useful improvements.
	\bibliography{bibl}{}

\begin{thebibliography}{10}

\bibitem{Baclawski72}
{\sc Baclawski, K.}
\newblock {Automorphisms and derivations of incidence algebras}.
\newblock {\em Proc. Amer. Math. Soc. 36}, 2 (1972), 351--356.

\bibitem{Benkovic16}
{\sc Benkovi{\v{c}}, D.}
\newblock Jordan {{\(\sigma\)}}-derivations of triangular algebras.
\newblock {\em Linear Multilinear Algebra 64}, 2 (2016), 143--155.

\bibitem{Benkovic22}
{\sc Benkovi{\v{c}}, D.}
\newblock Lie {{\(\sigma\)}}-derivations of triangular algebras.
\newblock {\em Linear Multilinear Algebra 70}, 15 (2022), 2966--2983.

\bibitem{Bresar95}
{\sc Bre{\v{s}}ar, M.}
\newblock On generalized biderivations and related maps.
\newblock {\em J. Algebra 172}, 3 (1995), 764--786.

\bibitem{BFS15}
{\sc Brusamarello, R., Fornaroli, E.~Z., and Santulo~Jr, E.~A.}
\newblock {Multiplicative automorphisms of incidence algebras}.
\newblock {\em Comm. Algebra}, 43 (2015), 726--736.

\bibitem{FP1}
{\sc Fornaroli, {\'E}.~Z., and Pezzott, R. E.~M.}
\newblock Additive derivations of incidence algebras.
\newblock {\em Comm. Algebra 49}, 4 (2021), 1816--1828.

\bibitem{Kh-der}
{\sc Khripchenko, N.~S.}
\newblock {Derivations of finitary incidence rings}.
\newblock {\em Comm. Algebra 40}, 7 (2012), 2503--2522.

\bibitem{Khr16}
{\sc Khrypchenko, M.}
\newblock {Jordan derivations of finitary incidence rings}.
\newblock {\em Linear Multilinear Algebra 64}, 10 (2016), 2104--2118.

\bibitem{Loday92}
{\sc Loday, J.-L.}
\newblock {\em Cyclic homology}, vol.~301 of {\em Grundlehren der
  mathematischen Wissenschaften [Fundamental Principles of Mathematical
  Sciences]}.
\newblock Springer-Verlag, Berlin, 1992.
\newblock Appendix E by Mar\'{\i}a O. Ronco.

\bibitem{Candindo-Repka-Juana17}
{\sc Mart{\'{\i}}n~Gonz{\'a}lez, C., Repka, J., and S{\'a}nchez-Ortega, J.}
\newblock Automorphisms, {{\(\sigma\)}}-biderivations and
  {{\(\sigma\)}}-commuting maps of triangular algebras.
\newblock {\em Mediterr. J. Math. 14}, 2 (2017), 25.
\newblock Id/No 68.

\bibitem{Munkres}
{\sc Munkres, J.~R.}
\newblock {\em Elements of algebraic topology}.
\newblock Addison-Wesley Publishing Company, Inc., California, 1984.

\bibitem{Rota64}
{\sc Rota, G.-C.}
\newblock {On the foundations of combinatorial theory. I. Theory of M{\"o}bius
  functions}.
\newblock {\em Z. Wahrscheinlichkeitstheorie und Verw. Gebiete 2}, 4 (1964),
  340--368.

\bibitem{Juana13}
{\sc S{\'a}nchez-Ortega, J.}
\newblock {{\(\sigma\)}}-mappings of triangular algebras.
\newblock {\em {\tt arXiv:1312.4635}\/} (2013).

\bibitem{SpDo}
{\sc {Spiegel}, E., and {O'Donnell}, C.~J.}
\newblock {\em {Incidence Algebras}}.
\newblock New York, NY: Marcel Dekker, 1997.

\bibitem{Stanley70}
{\sc {Stanley}, R.}
\newblock {Structure of incidence algebras and their automorphism groups}.
\newblock {\em {Bull. Am. Math. Soc.} 76\/} (1970), 1236--1239.

\bibitem{Xiao15}
{\sc Xiao, Z.~K.}
\newblock {Jordan derivations of incidence algebras}.
\newblock {\em Rocky Mountain J. Math. 45}, 4 (2015), 1357--1368.

\bibitem{Zhang-Khrypchenko}
{\sc Zhang, X., and Khrypchenko, M.}
\newblock {Lie derivations of incidence algebras}.
\newblock {\em Linear Algebra Appl. 513\/} (2017), 69--83.

\end{thebibliography}
	\bibliographystyle{acm}
	
\end{document}